\newcommand{\medcup}{\mathbin{\scalebox{1.5}{\ensuremath{\cup}}}}
\DeclareMathOperator{\gr}{Gr}
\DeclareMathOperator{\bF}{\mathbf{F}}
\DeclareMathOperator{\bK}{\mathbf{K}}
\DeclareMathOperator{\dif}{\text{\normalfont d}}
\DeclareMathOperator{\sym}{Sym}
\def\log{\mathrm{log}\,}
\theoremstyle{plain}
\newtheorem{thm}{Theorem}[section]
\newtheorem{lem}[thm]{{Lemma}}
\newtheorem{pro}[thm]{Proposition}
\newtheorem{defi}[thm]{Definition}
\theoremstyle{remark}
\newtheorem{rmk}[thm]{Remark}
\numberwithin{equation}{section}
\theoremstyle{plain}
\newcommand{\thistheoremname}{}
\newtheorem*{genericthm*}{\thistheoremname}
\newenvironment{namedthm*}[1]{\renewcommand{\thistheoremname}{#1}%
	\begin{genericthm*}}
	{\end{genericthm*}}
\newtheoremstyle{named}{}{}{\itshape}{}{\bfseries}{.}{.5em}{\thmnote{#3's }#1}
\theoremstyle{named}
\newcommand\thankssymb[1]{\textsuperscript{\@fnsymbol{#1}}}
\begin{document} 
\title[Non-archimedean Ax-Lindemann Theorem]{\bf Big Picard Theorem for jet differentials\\ and Non-archimedean  Ax-Lindemann  Theorem} 


\subjclass[2010]{30G06, 32P05, 32H25, 32H30, 11J91, 14G22, 32Q45}
\keywords{Nevanlinna theory, jet differentials, non-archimedean, Picard Theorem, hyperbolicity, Ax-Lindemann Theorem,
abelian varieties}

\author{Dinh Tuan Huynh}

\address{Hua Loo-Keng center for Mathematical Sciences, Academy of Mathematics and System Science, Chinese Academy of Sciences, Beijing 100190, China \& Department of Mathematics, University of Education, Hue University, 34 Le Loi St., Hue City, Vietnam}
\email{dinhtuanhuynh@hueuni.edu.vn}

\author{Ruiran Sun}
\address{Institut fur Mathematik, Universit\"at Mainz, Mainz, 55099, Germany}
\email{ruirasun@uni-mainz.de}

\author{Song-Yan Xie\thankssymb{1}}
\thanks{\thankssymb{1} \footnotesize{partially supported by  NSFC Grant No.~$11688101$}}
\address{Academy of Mathematics and System Science \& Hua Loo-Keng Key Laboratory
	of Mathematics, Chinese Academy of Sciences, Beijing 100190, China}
\email{xiesongyan@amss.ac.cn}

\begin{abstract}
	By implementing jet differential techniques in non-archimedean geometry, we obtain a big Picard type extension theorem, which generalizes a previous result of Cherry and Ru. As applications, we establish two hyperbolicity-related results. Firstly, we prove a non-archimedean Ax-Lindemann theorem for totally degenerate abelian varieties. Secondly, we show the pseudo-Borel hyperbolicity for subvarieties of general type in abelian varieties.
\end{abstract}

\maketitle

\section{\bf Introduction }

The classical Lindemann-Weierstrass Theorem states that, if $\alpha_1,\dots,\alpha_n$ are $\mathbb{Q}$-linearly independent algebraic numbers, then their exponentials $\mathrm{exp}(\alpha_1),\dots,\mathrm{exp}(\alpha_n)$ are $\mathbb{Q}$-algebraically independent. 
To understand a conjecture of Schanuel, Ax established the differential field analogues of it in \cite{Ax71}, which can be formulated in the following geometrical way 
\begin{namedthm*}{Ax-Lindemann Theorem}
	(i) (for tori)
Let $\pi_{\mathrm{t}}:= (\mathrm{exp}(2\pi i \cdot),\dots,\mathrm{exp}(2\pi i \cdot)):\,\mathbb{C}^n \to (\mathbb{C}^*)^n$ be the uniformizing map and let $Y \subset \mathbb{C}^n$ be an algebraic subvariety. Then any irreducible component of the Zariski closure of the image $\pi_{\mathrm{t}}(Y)$ is a translate of some subtorus of $(\mathbb{C}^*)^n$.\\
(ii) (for abelian varieties)
Let $\pi_{\mathrm{a}}:\,\mathbb{C}^n \to A$ be the uniformizing map of a complex abelian variety $A$ and let $Y \subset \mathbb{C}^n$ be an algebraic subvariety. Then any irreducible component of the Zariski closure of the image $\pi_{\mathrm{a}}(Y)$ is a translate of some abelian subvariety of $A$.
\end{namedthm*} 
{The uniformizing map $\pi_{\mathrm{t}}$ (resp. $\pi_{\mathrm{a}}$) satisfies the \emph{exponential differential equation} associated to the commutative group scheme $(\mathbb{G}_m)^n$ (resp. $A$) over $\mathbb{C}$. For an arbitrary differential field $\bK$ of characteristic zero and for any semiabelian variety $G$ over $\bK$, one can still formulate the exponential differential equation associated to $G$ and $\mathrm{Lie}\,G$, albeit there is in general no uniformization of $G$ by $\mathrm{Lie}\,G$. In \cite{kir} Kirby proved an analogue of Ax's theorem in this setting, which over $\bK=\mathbb{C}$ implies the above statement.} 

Along another direction, Pila in his seminar paper \cite{Pila} obtained a hyperbolic version of the above Ax-Lindemann theorem, which concerns about the (orbifold-) uniformization of products of modular curves.
This version of Ax-Lindemann theorem plays an essential role in his proof of Andr\'{e}-Oort conjecture for products of modular curves.
Thereafter, Ullmo, Yafaev and Klingler
   established further  generalization  in a broader setting \cite{Ull14, UY14, KUY16, KUY18}, see also \cite{Peterzil-Starchenko17, Dinh-Viet2020} for other generalizations. Such transcendence results have drawn extensive attention, by its own interests as well as the linkage to prominent problems, e.g., the Andr\'{e}-Oort conjecture \cite{PT14, Tsi18}.
   
  Parallelly,
it would be natural to seek Ax-Lindemann type results  in \emph{non-archimedean} settings. However, most complex uniformizations are not known to have appropriate non-archimedean counterparts at the moment. For instance, every complex abelian variety $\mathbb{C}^n/\Gamma$ of dimension $n$ can be uniformized by $ \mathbb{C}^n\rightarrow \mathbb{C}^n/\Gamma$ which is an infinite, transcendental  morphism, while an abelian variety over a non-archimedean field with \emph{good reduction} admits no such uniformization.

Accessibly, for products of hyperbolic Mumford curves which are known to have nice non-archimedean uniformizations by Mumford's theory \cite{Mum72}, Chambert-Loir and Loeser~\cite{CL17} established a non-archimedean analogue of Pila's hyperbolic Ax-Lindemann theorem. 
Inspired by their work, in this paper we prove 

\begin{thm}[Non-archimedean Ax-Lindemann]
		\label{thm-A}
Let $\bF$ be a complete algebraically closed non-archimedean valued field of characteristic zero.
Let $p:\, (\mathbb{G}^{\mathrm{an}}_{m,\bF})^g \to A^{\mathrm{an}}$ be the uniformizing map of a totally degenerate abelian variety $A$ over $\bF$. Let $X \subset (\mathbb{G}_{m,\bF})^g$ be an affine variety over $\bF$. Then any irreducible component of the Zariski closure of the image $p(X^{\mathrm{an}})$ is a translate of some abelian subvariety of $A$.
\end{thm}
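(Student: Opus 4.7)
The plan is to argue by contradiction, combining an algebraic Ueno-type reduction via the stabilizer of $Y$ with the Big Picard extension theorem for jet differentials established earlier in the paper.

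Suppose for contradiction that some irreducible component $Y$ of the Zariski closure of $p(X^{\mathrm{an}})$ is not a translate of any abelian subvariety of $A$. Let $B := \mathrm{Stab}^0(Y)$ denote the identity component of its stabilizer, a proper abelian subvariety of $A$. Since total degeneracy is preserved under quotients by abelian subvarieties, $A' := A/B$ is again totally degenerate with induced uniformization $p' : (\mathbb{G}_{m,\bF}^{\mathrm{an}})^{g'} \to (A')^{\mathrm{an}}$ for $g' = g - \dim B$. Let $X' \subset (\mathbb{G}_{m,\bF})^{g'}$ denote the Zariski closure of the image of $X$ under the algebraic quotient (still an affine variety), $\iota : X' \hookrightarrow (\mathbb{G}_m)^{g'}$ the inclusion, and set $Y' := Y/B$. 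Then $Y'$ is an irreducible component of the Zariski closure of $p'((X')^{\mathrm{an}})$ satisfying $\mathrm{Stab}^0(Y') = \{0\}$ and $\dim Y' > 0$ (the latter because $Y$ is not a translate of $B$). By Ueno's theorem, valid over any algebraically closed field of characteristic zero, $Y'$ is of general type.

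Next I would choose a smooth projective compactification $\overline{X'}$ of $X'$ with boundary divisor $D = \overline{X'} \setminus X'$, and apply the paper's Big Picard extension theorem to the non-constant analytic map
\[
\varphi := p'|_{(X')^{\mathrm{an}}} \,\colon\, (X')^{\mathrm{an}} \,\longrightarrow\, (Y')^{\mathrm{an}} \,\hookrightarrow\, (A')^{\mathrm{an}},
\]
using the abundant jet differentials on the general-type target $Y'$ inherited from $A'$. Combined with non-archimedean GAGA for the projective variety $\overline{X'}$, the extension yields an algebraic morphism $\overline{\varphi} : \overline{X'} \to A'$ whose analytification restricts to $\varphi$ on $(X')^{\mathrm{an}}$.

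The final step is a universal-cover argument. Since $\iota^{\mathrm{an}} : (X')^{\mathrm{an}} \to (\mathbb{G}_{m,\bF}^{\mathrm{an}})^{g'}$ is an analytic lift of $\varphi$ along the covering $p'$, the induced map $\pi_1((X')^{\mathrm{an}}) \to \pi_1((A')^{\mathrm{an}}) \cong \mathbb{Z}^{g'}$ is trivial. The open immersion $X' \hookrightarrow \overline{X'}$ with smooth normal-crossing boundary induces, in characteristic zero, a surjection on fundamental groups, so $\pi_1(\overline{X'}^{\mathrm{an}}) \to \pi_1((A')^{\mathrm{an}})$ is also trivial. Hence $\overline{\varphi}^{\mathrm{an}}$ lifts to an analytic morphism $\widetilde{\overline{\varphi}} : \overline{X'}^{\mathrm{an}} \to (\mathbb{G}_{m,\bF}^{\mathrm{an}})^{g'}$; by Berkovich GAGA this lift is algebraic, and being a morphism from the projective $\overline{X'}$ to the affine torus $(\mathbb{G}_m)^{g'}$, it must be constant. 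But its restriction to $X'$ equals $\iota$, forcing $X'$ to be a single point, contradicting $\dim X' \geq \dim Y' > 0$.

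\textbf{Main obstacle.} The critical technical step is the Big Picard application: one must carefully exhibit jet differentials on $Y'$ (inherited from those on the ambient abelian variety $A'$) vanishing on an ample divisor, and verify that the resulting extension theorem is applicable to maps from the affine $X'$ with simple normal-crossing boundary in $\overline{X'}$. The universal-cover step is also delicate, since it relies on the surjectivity $\pi_1((X')^{\mathrm{an}}) \twoheadrightarrow \pi_1(\overline{X'}^{\mathrm{an}})$ for Berkovich analytic spaces, the non-archimedean counterpart of the classical SGA~1 result for smooth varieties in characteristic zero.
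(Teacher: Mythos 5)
Your overall skeleton (reduce to a general-type quotient via the stabilizer and Ueno's theorem, produce a jet differential, invoke the Big Picard extension theorem, then derive a contradiction from the algebraicity of the extended uniformizing map) matches the paper's strategy, but two of your steps have genuine gaps. First, Theorem~\ref{big-Picard} is a \emph{one-variable} statement: its source is an annulus $A(0,R]$, not a higher-dimensional quasi-projective variety. You cannot apply it directly to $\varphi\colon (X')^{\mathrm{an}}\to (Y')^{\mathrm{an}}$ to produce an algebraic extension $\overline{\varphi}\colon \overline{X'}\to A'$; extending across a boundary divisor of a higher-dimensional $\overline{X'}$ would require a slicing plus Hartogs-type argument that is nowhere established in the non-archimedean setting. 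The paper's proof is structured precisely to avoid this: it first proves the theorem for \emph{curves} (Theorems~\ref{translate} and~\ref{strict-trans-p_X}), and then handles $\dim X>1$ by a soft argument — pick $z\in p(X^{\mathrm{an}})\setminus \mathrm{Sp}(Z)^{\mathrm{an}}$ (nonempty by Kawamata), run a curve through a preimage of $z$, and apply the curve case to contradict $z\notin\mathrm{Sp}(Z)$. Second, the phrase ``abundant jet differentials on the general-type target $Y'$'' hides the actual content: general type alone does not yield sections of $E_{k,m}T^*_{Y'}\otimes\mathcal{A}^{-1}$ with $\varphi^*\omega\not\equiv 0$. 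The paper obtains them from the jet-projection dichotomy on the Demailly--Semple tower of the abelian variety (Propositions~\ref{case-1} and~\ref{case-2}): the triviality $A_k=A\times\mathcal{R}_{g,k}$, the bigness of $\mathcal{O}(1)$ on $\mathcal{R}_{g,k}$, and the zero-dimensionality of the stabilizer force $\tau_k$ to be generically finite, and the non-vanishing of $\varphi^*\omega$ comes from taking $Z_k$ to be the Zariski closure of the image of the \emph{lifted} map $f_k$. You flag this as the ``main obstacle'' but supply none of it; as written the proof does not go through.

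Your final step is a genuinely different route from the paper's. Where you lift $\overline{\varphi}^{\mathrm{an}}$ to the torus via the surjectivity of $\pi_1((X')^{\mathrm{an}})\to\pi_1(\overline{X'}^{\mathrm{an}})$, the paper proves non-extendability of $p_X$ (Theorem~\ref{non-extendable}) by a fundamental-domain counting argument: an algebraic extension of degree $N$ would force $X^{\mathrm{an}}$ to meet only finitely many $\Gamma$-translates of $\ell^{-1}(F)$, hence to lie in an affinoid, which is impossible for the analytification of a positive-dimensional variety. Your $\pi_1$-argument can likely be made to work (the skeleton of $\overline{X'}^{\mathrm{an}}$ avoids the boundary divisor and carries the fundamental group, giving the needed surjectivity), but the appeal to ``the non-archimedean counterpart of the SGA~1 result'' conflates \'etale and topological fundamental groups and would need the Berkovich/Hrushovski--Loeser theory of skeleta and deformation retractions — machinery the paper's more elementary argument deliberately avoids.
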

{We emphasize here the difference between non-archimedean uniformization and complex uniformization. 
In our situation the universal covering space is \emph{not} the Lie algebra $\mathrm{Lie}\,A$ but the \emph{rigid torus} $(\mathbb{G}_{m,\bF})^g$ which comes from the identity component of the N\'{e}ron model of the totally degenerate abelian variety $A$, and the uniformizing map $p:\, (\mathbb{G}^{\mathrm{an}}_{m,\bF})^g \to A^{\mathrm{an}}$ is constructed from formal lifting of torus and applying the functor of Raynaud's generic fiber (cf. Subsection~\ref{non-arc_unif} for details). When $g=1$ this is exactly the famous Tate curve. Due to the abstract nature of the non-archimedean uniformization, it is in general not clear that which differential equation should be satisfied by the uniformizing map $p$, and thus we do not know how to use Kirby's differential field argument to derive Theorem~\ref{thm-A}.}

{
	The non-archimedean uniformization is a natural analogue of the complex one, even though its relation with Lie algebra and differential equation is unclear. It plays crucial role in many important results. In the fundamental work \cite{Che94}, by employing the aforementioned non-archimedean universal coverings of abelian varieties, Cherry proved that over a non-archimedean valued field, any analytic map from $\mathbb{A}^1$ or $\mathbb{G}_m$ to some abelian variety must be constant. In \cite{Liu11}, non-archimedean uniformization of totally degenerate abelian varieties is used for understanding a non-archimedean analogue of the Calabi-Yau theorem. In \cite{Morrow2020}, the non-archimedean hyperbolicity of subvarieties of abelian varieties is proved, and in the  proof, the non-archimedean uniformization plays again a significant role.}  

We briefly explain the strategy to prove Theorem~\ref{thm-A}.
Avoiding the difficulty in handling the uniformizing differential equation in non-archimedean case, we use a geometrical approach coming from complex hyperbolic geometry. More precisely, our proof is an adaptation of a strategy due to Noguchi  \cite{Nog18} using Nevanlinna theory. Indeed, we will implement the jet differential method in the non-archimedean setting, by establishing the following Schwartz's Lemma type result for higher order jet differentials,
which generalizes a theorem of Cherry and Ru~\cite{Cherry-Ru2004} about $1$-jet.

\begin{thm}[Big Picard Theorem for jet differentials]
	\label{big-Picard}
	Let $X$ be a nonsingular projective variety defined over  $\bF$ and let $D$ be a simple normal crossing divisor on $X$. Let $f:A(0,R]\rightarrow X\setminus D$ be a rigid analytic map. If there exists some global logarithmic jet differential form along $D$ vanishing on some ample line bundle $\mathcal{A}$ on $X$, namely 
	\[
	\omega\in H^0\big(X,E^{GG}_{k,m}T_X^*(\log D)\otimes\mathcal{A}^{-1}\big),
	\] 
	such that $f^*\omega\not\equiv 0$, then $f$ extends to a rigid analytic map from $A[0,R]$ to $X$.
\end{thm}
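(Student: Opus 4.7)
The plan is a rigid analytic adaptation of Noguchi's Nevanlinna-theoretic proof of the analogous complex Big Picard Theorem for logarithmic jet differentials, generalizing the symmetric-differential case ($k=1$) of Cherry-Ru to arbitrary jet order. After a harmless rescaling assume $R=1$. Fix $N\gg 0$ so that $\mathcal{A}^{N}$ is very ample, let $s_{0},\dots,s_{K}$ be a basis of $H^{0}(X,\mathcal{A}^{N})$, and for $r\in(0,1]$ let $|\cdot|_{r}$ denote the spectral (Gauss) norm on rigid analytic functions at radius $r$. Define the Nevanlinna-type characteristic
\[
T_{f}(r) \;:=\; \max_{0\le i\le K} \log |s_{i}\circ f|_{r}.
\]
A standard Laurent series argument shows that $f$ extends to a rigid analytic map $A[0,1]\to X$ if and only if $T_{f}(r)=O(\log(1/r))$ as $r\to 0^{+}$, i.e.\ each $s_{i}\circ f$ is meromorphic at the origin; it therefore suffices to bound $T_{f}(r)$ logarithmically.

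Write $f^{*}\omega = g(z)(dz)^{m}$ for the nonzero rigid analytic $g$ on $A(0,1]$, the crux being a Schwarz-type estimate for higher jet differentials. I would express $\omega$ locally near a point of $D$ as a polynomial of weighted degree $m$ in the variables standing for $d^{j}w_{i}$ (ordinary jets) and $d^{j}\log w_{i}$ (logarithmic jets), $1\le j\le k$, with coefficients that are sections of $\mathcal{A}^{-1}$. The non-archimedean logarithmic derivative lemma, which asserts for any nonzero rigid analytic $h$ on $A(0,1]$ the sharp bound
\[
|h^{(j)}|_{r}\;\le\; r^{-j}\,|h|_{r},\qquad j\ge 1,\;r\in(0,1],
\]
is iterated termwise in the pullback. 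Combined with the local trivializations of $\mathcal{A}^{-1}$ by $1/s_{i}$ on the charts $\{s_{i}\ne 0\}$ and the embedding $[s_{0}:\dots:s_{K}]$, this should yield a global inequality of the shape
\[
\log|g|_{r} \;+\; \tfrac{1}{N}\,T_{f}(r) \;\le\; m\log(1/r) + O(1),
\]
valid for all $r\in(0,1]$.

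To conclude, since $g$ is a nonzero rigid analytic function on $A(0,1]$, its spectral norm obeys $\log|g|_{r}\ge n_{0}\log r - O(1)$ as $r\to 0^{+}$, where $n_{0}$ is the smallest exponent in the Laurent expansion of $g$; should $g$ possess an essential singularity, $\log|g|_{r}\to+\infty$ and the conclusion is only strengthened. Substituting into the displayed inequality yields $T_{f}(r)=O(\log(1/r))$, and by the extension criterion $f$ extends to a rigid analytic map $A[0,1]\to X$, as required. The main obstacle I foresee is executing the second step: while the first-order non-archimedean log-derivative inequality is classical, for jet differentials of arbitrary order $k$ one must iterate it while simultaneously tracking mixed ordinary/logarithmic jets along the components of $D$ and correctly accounting for the $\mathcal{A}^{-1}$-twist through the ample embedding. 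This demands careful bookkeeping of weights in the Green-Griffiths filtration on $E^{GG}_{k,m}T^{*}_{X}(\log D)$ together with a precise local coordinate analysis along the normal-crossing locus of $D$.
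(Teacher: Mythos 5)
Your architecture is sound and runs parallel to the paper's: both arguments rest on (i) an extension criterion saying that logarithmic growth of the characteristic function forces extendability (the paper's Proposition~\ref{extension condition}, quoted from Cherry--Ru), and (ii) a Schwarz-type estimate obtained by iterating the non-archimedean logarithmic derivative lemma over the local expression of $\omega$ in mixed ordinary/logarithmic jet coordinates. Where you genuinely diverge is in how the twist by $\mathcal{A}^{-1}$ is converted into control of $T_f$. The paper does \emph{not} prove your sup-norm inequality $\log|g|_r+\tfrac1N T_f(r)\le m\log(1/r)+O(1)$ directly; instead it chooses, via Proposition~\ref{existence of very ample divisor meeting f often}, a generic divisor $H\in|\ell\mathcal{A}|$ with $N_g(H,r)\ge(1-\epsilon)T_g(H,r)+O(\log r)$, multiplies $\omega^{\otimes\ell}$ by a rational function with divisor $H-\ell\mathcal{A}$ to obtain an \emph{untwisted} jet differential vanishing on $H$, and then passes through the counting function and the First Main Theorem before invoking the derivative lemma. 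Your route is more direct and, in fact, attainable from the paper's own untwisted lemma: for each basis element $s_i$ of $H^0(X,\mathcal{A}^N)$, the product $s_i\cdot\omega^{\otimes N}$ is a global section of $E^{GG}_{k,Nm}T_X^*(\log D)$ with no twist, and applying the derivative lemma to it gives exactly $\log|s_i\circ f|_r+N\log|g|_r\le Nm\log(1/r)+O(1)$; taking the maximum over $i$ yields your displayed inequality. This buys you independence from Proposition~\ref{existence of very ample divisor meeting f often} and from the counting-function/FMT detour, at the price of having to set up the characteristic function via the sections $s_i$ rather than via a divisor.

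Two soft spots remain. First, the step you yourself flag as the obstacle is precisely the technical heart of the paper (Proposition~\ref{affinod covering and good express of jet} and Section~3.3): one needs a \emph{finite} affinoid covering of $X$ on which $\omega$ is expressed through finitely many rational functions $\phi_i$ with coefficients $\psi_{\alpha_1,\dots,\alpha_k}$ that are regular, hence of bounded spectral norm, on each affinoid piece; without this uniformity the termwise application of the derivative lemma does not produce an $O(1)$ error independent of $r$ and of the chart containing $f(\xi)$. Second, your concluding lower bound on $\log|g|_r$ is phrased via ``the smallest exponent $n_0$,'' which presupposes meromorphy at $0$, and the essential-singularity caveat points the wrong way ($\log|g|_r\to+\infty$ would make the left-hand side of your inequality blow up, not weaken it). The fix is elementary: for any single nonzero Laurent coefficient $a_{n_1}$ of $g$ one has $|g|_r\ge|a_{n_1}|r^{n_1}$, which already gives $\log|g|_r\ge -|n_1|\log(1/r)-O(1)$ and hence $T_f(r)=O(\log(1/r))$ with no case distinction.
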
 


As a second application of the above theorem, 
we prove the following result
concerning 
\emph{Borel hyperbolicity}, a notion introduced by 
Javanpeykar and Kucharczyk \cite{JK20} to capture the essence of Borel's Algebraicity Theorem \cite{Bor72} for arithmetic varieties.

\begin{thm}[Non-archimedean pseudo-Borel hyerbolicity]
	
	\label{thm-B}
Suppose that $X$ is a closed subvariety of general type contained in an abelian variety $A$ over $\bF$. Denote by $\mathrm{Sp}(X)$ the union of translates of positive-dimensional abelian subvarieties of $A$ contained in $X$.
Then $X$ is $\bF$-analytically Borel hyperbolic modulo $\mathrm{Sp}(X)$. That is, for any algebraic variety $S$ over $\bF$, any rigid analytic map $f:\,S^{\mathrm{an}} \to X^{\mathrm{an}}$ with $f(S^{\mathrm{an}}) \not\subset \mathrm{Sp}(X)^{\mathrm{an}}$ is induced from an algebraic morphism.  
\end{thm}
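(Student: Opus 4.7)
The plan is to combine the Big Picard Theorem~\ref{big-Picard} with the existence of appropriate logarithmic jet differentials on subvarieties of general type in abelian varieties, following a strategy from complex hyperbolic geometry adapted to the non-archimedean setting, and to conclude by rigid GAGA.

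First I would perform a reduction to a local extension problem on the punctured disc. Choose a projective compactification $\bar{S}$ of $S$ such that $E := \bar{S} \setminus S$ is a simple normal crossing divisor (Hironaka in characteristic zero). Since $X$ is already projective (being closed in the projective variety $A$), rigid GAGA reduces the proof of algebraicity of $f$ to showing that $f$ extends to a rigid analytic map $\bar{f} : \bar{S}^{\mathrm{an}} \to X^{\mathrm{an}}$. Such an extension can be constructed locally along $E$: by a Hartogs-type removable-singularity principle in rigid geometry, one reduces to the one-dimensional situation. Concretely, it suffices to prove that any rigid analytic map $f : A(0,R] \to X^{\mathrm{an}}$ with $f(A(0,R]) \not\subset \mathrm{Sp}(X)^{\mathrm{an}}$ extends analytically to $A[0,R]$.

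Second, I would produce a suitable logarithmic jet differential. Let $\pi : \tilde{X} \to X$ be a log resolution of singularities, with $D \subset \tilde{X}$ the resulting simple normal crossing divisor so that $\tilde{X} \setminus D \simeq X_{\mathrm{reg}}$. The composition $\tilde{X} \to X \hookrightarrow A$ pulls back the trivial cotangent sheaf $\Omega^1_A$ to a trivial rank-$g$ subsheaf of $\Omega^1_{\tilde{X}}(\log D)$. Combining this with the general-type hypothesis on $X$ and a Bogomolov-type argument in the spirit of Noguchi--Winkelmann--Yamanoi, one obtains for some $k,m$ a nonzero section
\[
\omega \in H^0\!\big(\tilde{X},\, E^{GG}_{k,m}\, T^*_{\tilde{X}}(\log D)\otimes \mathcal{A}^{-1}\big),
\]
with $\mathcal{A}$ an ample line bundle, whose base locus is contained in the preimage of $\mathrm{Sp}(X)$ under $\pi$. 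This construction is algebraic over any field of characteristic zero and therefore transfers to $\bF$. Given $f$ as above, properness of $\pi$ and the valuative criterion (in rigid form) allow us to lift $f$ to a rigid analytic map $\tilde{f} : A(0,R] \to \tilde{X}^{\mathrm{an}}$; by the control on the base locus of $\omega$, the pullback $\tilde{f}^*\omega$ is not identically zero. Theorem~\ref{big-Picard} then extends $\tilde{f}$ analytically across the origin, and composing with $\pi$ yields the desired extension of $f$. Reassembling local extensions gives $\bar{f}$, and rigid GAGA produces an algebraic morphism $\bar{S} \to X$ inducing $f$.

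The principal obstacle is the construction of the jet differential $\omega$ whose base locus is contained in (the preimage of) $\mathrm{Sp}(X)$. In the complex-analytic setting this is a nontrivial output of the positivity theory of jet bundles on subvarieties of abelian varieties (Bogomolov, Mok, Noguchi--Winkelmann--Yamanoi, and others), and the task here is to isolate the purely algebraic, characteristic-zero core of that theory so it can be rerun over $\bF$. A secondary technical point is the careful handling of the lifting across $\pi$ and of the Hartogs-type extension along the multi-component boundary $E \subset \bar{S}$.
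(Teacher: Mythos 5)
Your overall skeleton (produce a jet differential, apply Theorem~\ref{big-Picard}, conclude by rigid GAGA) is the right one, but the step you yourself flag as ``the principal obstacle'' is a genuine gap, and it is one the paper deliberately avoids rather than solves. You require a section $\omega\in H^0\big(\tilde X, E^{GG}_{k,m}T^*_{\tilde X}(\log D)\otimes\mathcal{A}^{-1}\big)$ whose base locus is contained in $\pi^{-1}(\mathrm{Sp}(X))$; you never construct it, and such base-locus control is a substantially stronger statement than what is needed. The paper's route: first reduce to the case where $S$ is a quasi-projective curve by invoking an existing criterion (Theorem~3.1 of Sun's paper on non-archimedean hyperbolicity), then replace $X$ by the Zariski closure $Z$ of $f(S^{\mathrm{an}})$. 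Since $f(S^{\mathrm{an}})\not\subset\mathrm{Sp}(X)^{\mathrm{an}}$, the stabilizer of $Z$ is finite and $Z$ is again of general type by Ueno--Kawamata; Proposition~\ref{case-2} (generic finiteness of the jet projection $\tau_k:Z_k\to\mathcal{R}_{g,k}$, bigness of the tautological bundle via Demailly's theorem and the Lefschetz principle, Kodaira's lemma, and the direct image formula) then yields \emph{some} nonzero $\omega\in H^0(Z,E_{k,m}T^*_Z\otimes\mathcal{A}^{-1})$, and the Zariski density of $f(S^{\mathrm{an}})$ in $Z$ forces $f^*\omega\not\equiv 0$ with no control on the base locus whatsoever. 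This replacement trick is exactly what makes your hard unproved input unnecessary.

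A secondary problem is your reduction to dimension one. The ``Hartogs-type removable-singularity principle in rigid geometry'' that is supposed to reduce the extension problem along the snc boundary $E\subset\bar S$ to the punctured disc is asserted without justification and is delicate in the non-archimedean setting; moreover, even granting it, the hypothesis $f(S^{\mathrm{an}})\not\subset\mathrm{Sp}(X)^{\mathrm{an}}$ is not a local condition, so you would have to explain why the localized maps on punctured discs still satisfy the nondegeneracy needed to make $\tilde f^*\omega\not\equiv 0$. Both issues disappear if you adopt the paper's reduction to curve sources and the Zariski-closure replacement described above.
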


Here is the outline of this paper. In Section~\ref{section 2} we recall some basic non-archimedean Nevanlinna theory. Next, in Section~\ref{Section 3}
we present the jet differential technique,
and establish an analytic extension type Theorem~\ref{big-Picard}, which is a key ingredient of our strategy. Lastly, in Section~\ref{Section 4} we prove our main Theorems~\ref{thm-A},~\ref{thm-B} 
in the same vein as~\cite{Nog18}, with  extra effort of 
 topological arguments to overcome the difficulty  caused by the absence of ``area-comparison method'' in non-archimedean geometry, see  Subsection~\ref{area argument}.
 
 \bigskip
\noindent{\bf Acknowledgment.} This paper is inspired by Professor Noguchi's talk given in the workshop ``Diophantine Approximation and Value Distribution Theory'' in Montreal, 2019. We thank D.-V. Vu for his interest and comments on our paper.
We also thank A. Javanpeykar, Professor S.S.Y Lu, Professor J. Xie and Professor Yamanoi for their interests and encouragements.
This paper was written during a visit of R. Sun to the Academy of Mathematics and Systems Science (AMSS) in Beijing, and he would like to thank AMSS and its members for their hospitality during the preparation of this paper. D. T. Huynh and S.-Y. Xie are grateful to AMSS for nice working conditions. Huynh also acknowledges the partial support of the Core Research Program of Hue University, Grant No. NCM.DHH.2020.15. R. Sun would like to thank Professor K. Zuo for his constant supports and encouragements.


\section{\bf Non-archimedean Nevanlinna theory}
\label{section 2}

We give a brief introduction about non-archimedean Nevanlinna theory and  collect some useful results for our applications. The reader is referred to~\cite{Cherry-Julie2002,Cherry-Ru2004} for more details. Let $\bF$ be an algebraically closed field, with characteristic zero, being complete with respect to a non-archimedian valuation $| \cdot |$. For $0\leq r_1<r_2\leq+\infty$, let $A[r_1,r_2]$ denote the closed annulus $\{z\in\bF:r_1\leq |z|\leq r_2\}$. Analogously, we denote the semi-bordered annuli by $A[r_1,r_2):=\{z\in\bF:r_1\leq |z|<r_2\}$ with the convention that $A[r_1,\infty):=\{z\in\bF:|z|\geq r_1\}$, and by $A(r_1,r_2]:=\{z\in\bF:r_1< |z|\leq r_2\}$. 
Set $|\bF|:=\{|z|:z\in\bF\}$. Then for any $r_1,r_2\in|\bF|$, the annulus $A[r_1,r_2]$ is affinoid. 

An {\sl analytic function} $f$ on the annulus $A[r_1,r_2]$ is a Laurent series 
\[
f(z)=\sum_{n\in\mathbb{Z}}a_nz^n,
\]
where the coefficients $a_n\in\bF$ satisfy $
\lim_{|n|\rightarrow\infty}|a_n|r^n=0$ for any $r_1\leq r\leq r_2$.
This definition can be generalized accordingly to any  type of annulus mentioned above. 

For each $r\in [r_1,r_2]$, we define
\[
|f|_r:=\sup_n|a_n|r^n,
\]
which turns out to be a non-archimedean absolute valuation on the ring of analytic functions on $A[r_1,r_2]$. A function is said to be {\sl meromorphic} on $A[r_1,r_2]$ if it can be written as a quotient of two analytic function on this annulus. By multiplicity law, the absolute valuation $|\cdot |_r$ can be extended to meromorphic functions.

Following \cite{Cherry-Ru2004}, an analytic function $f$ on $A[r_1,\infty)$ is said to be {\it analytic at infinity} if $f(z^{-1})$ can be extended to an analytic function on $A[0, \frac{1}{r_1}]$. A meromorphic function $f$ on $A[r_1,\infty)$ is said to be {\it meromorphic at infinity}
if $f(z^{-1})$ is meromorphic on $A[0, \frac{1}{r_1}]$.

Now we introduce the standard notations of Nevanlinna theory in the non-archimedean setting. Let $f=\sum_{n\in\mathbb{Z}}a_nz^n$ be an analytic function on the annulus $A[r_1,r_2]$. The {\sl proximity function} of $f$ at a point $a\in\bF$ is defined by
\[
m_f(a,r):=\max \{0,
-\log |f-a|_r\}\eqno\scriptstyle{(r_1\,\leq\,r\,\leq\,r_2)}.
\]
At the infinity where $a=\infty$, we set $m_f(r):=m_f(\infty,r):=\max\{0,\log|f|_r\}$.

The definition of a suitable {\sl counting function} for $f$ is delicate. We first denote
\[
k_f(r):=\inf\{n\in\mathbb{Z}:|f|_r=|a_n|r^n\}\quad\text{and}\quad K_f(r):=\sup\{n\in\mathbb{Z}:|f|_r=|a_n|r^n\}\eqno\scriptstyle{(r_1\,\leq\,r\,\leq\,r_2)},
\]
with the convention that $k_f(0)=0$ and $K_f(0)=\inf\{n:a_n\not=0\}$ when $r=0$ and $f(0)=0$. Then by a non-archimedean analogue of the Weierstrass Preparation Theorem, we can show that, for any $r$ and $R$ with $r_1\leq r\leq R\leq r_2$, the analytic function $f$ has exactly $K_f(R)-k_f(r)$ zero, counting multiplicity, in the annulus $A[r,R]$ (cf.~\cite{Cherry-Julie2002}). Hence it is  natural to define the {\sl counting functions} of $f$ by
\[
n_f(0,r):=K_f(r)-k_f(r)\quad\text{and}\quad N_f(0,r):=\int_{r_1}^rn_f(0,t)\dfrac{\dif t}{t}\eqno\scriptstyle{(r_1\,\leq\,r\,\leq\,r_2)}.
\]
In the case where $r_1=0$, the above definition is modified to be
\[
N_f(0,r)=n_f(0,0)
+
\int_0^r[n_f(0,t)-n_f(0,0)]\dfrac{\dif t}{t}\eqno\scriptstyle{(0\,\leq\,r\,\leq\,r_2)}.
\]
For a point $a\in \bF$, the counting function of $f$ with respect to $a$ is defined by $N_f(a,r)=N_{f-a}(0,r)$. The above definitions of proximity function and counting function can be generalized to the case where $f$ is meromorphic, see \cite{Cherry-Ru2004} for more details.
Regarding a meromorphic function $f$  as an analytic function on $\mathbb{P}^1(\bF)$, we can define $N_f(r,a)$ for any meromorphic function $f$ and any point $a\in\mathbb{P}^1(\bF)$.

Lastly, the {\sl characteristic function} of $f$ is given by
\[
T_f(r)=m_f(\infty,r)+N_f(\infty,r) \eqno\scriptstyle{(r_1\,\leq\,r\,\leq\,r_2)}.
\]

As an application of  the Poisson--Jensen Formula in the non-archimedean setting (cf. \cite{Cherry-Julie2002}), one receives
\begin{namedthm*}{First Main Theorem}
	Let $f$ be a meromorphic function on the annulus $A[r_1,r_2)$. Then for any point $a$ in $\mathbb{P}^1(\bF)$, there holds
	\[
	T_f(r)=m_f(a,r)+N_f(a,r)+O(\log r).
	\]	
	Furthermore, if $r_1=0$, then one can replace the error term $O(\log r)$ in the above estimate by a bounded term $O(1)$.
\end{namedthm*}

Most of the results in the classical Nevanlinna Theory have their counterparts in our current setting. We recall some of them below, which will be used in the sequence (see~\cite{Cherry-Ru2004}).

\begin{namedthm*}{Logarithmic Derivative Lemma} Let $f$ be a nonconstant meromorphic function on the annulus $A[r_1,r_2)$. For each $r\in[r_1,r_2)$ and for each integer $k\in\mathbb{N}$, one has
	\[
	\left|\dfrac{\dif^kf}{f}\right|_r
	\leq
	\dfrac{1}{r^k}
	\]
	and
	\[
	\left|\dif^k\log f\right|_r
	\leq
	\dfrac{C}{r^k},
	\]
	for some constant $C$ independent of $f$.
\end{namedthm*}

In order to prove the Theorem~\ref{big-Picard} we also need the following two results of Cherry and Ru~\cite{Cherry-Ru2004}.

\begin{pro}
	\label{existence of very ample divisor meeting f often}
	Let $X$ be a nonsingular projective variety and let $L$ be a very ample divisor on $X$. Then for any nonconstant rigid analytic map $f:A[r_1,r_2)\rightarrow X$ and for any $\epsilon>0$, one can find some very ample divisor $H$ in the complete linear system of $L$ such that
	\[
	N_f(H,r)\geq (1-\epsilon)T_f(H,r)+O(\log r).
	\]
\end{pro}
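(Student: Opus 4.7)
\smallskip
\noindent\textbf{Proof proposal.} The plan is to reduce this proposition to a non-archimedean Cartan-type Second Main Theorem for hyperplanes in general position, followed by a pigeonhole argument.

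First, using the very ampleness of $L$, I would fix a closed embedding $\iota:X\hookrightarrow\mathbb{P}^N$ with $\iota^{*}\mathcal{O}_{\mathbb{P}^N}(1)=L$, so that members of $|L|$ correspond to hyperplane sections $H'\cap X$ for hyperplanes $H'\subset\mathbb{P}^N$. Set $F:=\iota\circ f:A[r_1,r_2)\to\mathbb{P}^N$; this is nonconstant because $f$ is. Passing to the smallest projective linear subspace $\mathbb{P}^M\subset\mathbb{P}^N$ containing $F(A[r_1,r_2))$, I may regard $F:A[r_1,r_2)\to\mathbb{P}^M$ as a linearly nondegenerate rigid analytic map, without loss of generality.

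Given $\epsilon>0$, I would choose an integer $q$ so large that $(q-M)/q\geq 1-\epsilon$, and then select hyperplanes $H_1,\ldots,H_q\subset\mathbb{P}^N$ whose traces $H_i\cap\mathbb{P}^M$ lie in general position in $\mathbb{P}^M$. Applying the non-archimedean Cartan-type Second Main Theorem on the annulus $A[r_1,r_2)$ (as developed by Cherry and Ru) to $F$ and these hyperplanes yields
\[
(q-M)\,T_F(r)\,\leq\,\sum_{i=1}^{q}N_F(H_i\cap\mathbb{P}^M,r)+O(\log r).
\]
By the pigeonhole principle, some index $i_0$ satisfies
\[
N_F(H_{i_0}\cap\mathbb{P}^M,r)\,\geq\,\tfrac{q-M}{q}\,T_F(r)+O(\log r)\,\geq\,(1-\epsilon)\,T_F(r)+O(\log r).
\]
Setting $H:=H_{i_0}\cap X\in|L|$, and invoking functoriality of the characteristic and counting functions under the closed immersion $\iota$, I obtain $T_f(H,r)=T_F(r)+O(1)$ and $N_f(H,r)=N_F(H_{i_0}\cap\mathbb{P}^M,r)$, from which the desired inequality follows.

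The main obstacle is marshalling the correct version of the Cartan-type Second Main Theorem for rigid analytic maps defined on an annulus rather than on a disk, and simultaneously handling linear degeneracy by first restricting to the projective span $\mathbb{P}^M$. A secondary technical point is that the "general position" condition and the denominator $q-M$ must be arranged with respect to $\mathbb{P}^M$, not $\mathbb{P}^N$, so that the SMT is not vacuous; this is secured by picking the $H_i$ as generic extensions of generic hyperplanes in $\mathbb{P}^M$.
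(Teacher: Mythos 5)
The paper does not actually prove this proposition: it is quoted verbatim from Cherry--Ru \cite{Cherry-Ru2004}, so the only comparison available is with the argument in that reference, which proceeds quite differently from yours (it exploits the ultrametric structure of $|\cdot|_r$ on the coordinate functions of a reduced representation to control the proximity function of a suitably generic hyperplane section directly, rather than passing through a Second Main Theorem).

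The decisive gap in your proposal is the pigeonhole step. The inequality $(q-M)\,T_F(r)\le\sum_{i=1}^{q}N_F(H_i\cap\mathbb{P}^M,r)+O(\log r)$ holds for each $r$, and for each \emph{fixed} $r$ it produces an index $i_0=i_0(r)$ with $N_F(H_{i_0(r)},r)\ge\frac{q-M}{q}T_F(r)-O(\log r)$; but the proposition requires a \emph{single} divisor $H$ for which the lower bound holds for all $r$ (with a uniform $O(\log r)$), and nothing in your argument uniformizes the choice. The obvious repair fails quantitatively: bounding the other $q-1$ counting functions by $T_F(r)+O(\log r)$ via the First Main Theorem only yields $N_F(H_{i_0},r)\ge(1-M)\,T_F(r)-O(\log r)$ for a fixed $i_0$, which is vacuous once $M\ge 1$. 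Nor can you argue that the ``bad'' index set stabilizes: at each radius the set of hyperplanes with large proximity is governed by cancellations among the dominant Laurent coefficients at the exponent realizing $|F|_r$, and this exponent varies with $r$ over an infinite set, so a fixed finite family of hyperplanes in general position may have every member go bad at some radius. A correct argument must choose $H$ generically enough to avoid cancellation at \emph{every} radius simultaneously (this is what the ultrametric inequality buys you and is the heart of Cherry--Ru's proof); a finite general-position family plus pigeonhole does not achieve this. A secondary issue is that you invoke a Cartan-type Second Main Theorem for linearly nondegenerate maps on annuli without a source; what is elementary in the non-archimedean setting is the bound $\sum_i m_F(H_i,r)\le M\,T_F(r)+O(1)$ for hyperplanes in general position (no nondegeneracy needed), but substituting this weaker input leaves the same uniformity problem untouched.
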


\begin{pro}
	\label{extension condition}
	Let $X$ be a nonsingular projective variety with a very ample divisor $H$ on it. Let $f$ be a rigid analytic map from the annulus $A(0,R]$ to $X$. If the rigid analytic map $g:A[1/R,\infty)$ obtained from $f$ by setting $g(\xi)=f(1/\xi)$ satisfies
	\[
	T_g(H,r)=O(\log r),
	\]
	then $f$ extends to a rigid analytic map $\widetilde{f}:A[0,R]\rightarrow X$.
\end{pro}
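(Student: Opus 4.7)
The plan is to combine the very ample embedding given by $H$ with the non-archimedean Nevanlinna machinery recalled in Section~\ref{section 2}, and show that the slow-growth hypothesis $T_g(H,r)=O(\log r)$ forces the coordinate functions of $g$ (read through the embedding) to be rational in the parameter $\xi$. Such rational functions automatically extend across $\xi=\infty$, and via the substitution $z=1/\xi$ this produces the desired extension of $f$ across $z=0$.

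First I would fix a basis $s_0,\ldots,s_N$ of $H^0(X,\mathcal{O}_X(H))$, producing a closed embedding $\iota:X\hookrightarrow\mathbb{P}^N$, and write $\iota\circ g=[g_0:\cdots:g_N]$ for analytic functions $g_i$ on $A[1/R,\infty)$ chosen without a common divisor. Up to $O(1)$ one then has $T_g(H,r)=\log\max_i|g_i|_r$, and by the First Main Theorem each coordinate ratio $h_{ij}=g_i/g_j$ satisfies $T_{h_{ij}}(r)\leq T_g(H,r)+O(1)=O(\log r)$.

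The analytic core is the following claim: any meromorphic function $h$ on $A[r_0,\infty)$ with $T_h(r)=O(\log r)$ is meromorphic at $\xi=\infty$ in the Cherry--Ru sense. To prove it, I would first use that $N_h(\infty,r)=O(\log r)$ together with the monotonicity of $n_h(\infty,t)$ forces $n_h(\infty,t)$ to remain uniformly bounded, so $h$ has only finitely many poles; multiplying by a polynomial $Q(\xi)$ clearing them yields an analytic function $Qh$ with $|Qh|_r\leq Cr^{K}$ for some integer $K$. Expanding $Qh=\sum_n a_n\xi^n$ and comparing Laurent coefficients forces $a_n=0$ for every $n>K$, while the convergence of the Laurent series on $A[r_0,\infty)$ ensures that the negative-power tail becomes a convergent power series in $w=1/\xi$ near $w=0$. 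Hence $h(1/w)$ is meromorphic at $w=0$, proving the claim.

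Applying the claim to each $g_i$, the projective map $\iota\circ g$ extends to a rigid analytic morphism $A[1/R,\infty]\to\mathbb{P}^N$ in the chart parametrized by $w=1/\xi$; since $X$ is Zariski closed in $\mathbb{P}^N$ and the image of the open part already lies in $X$, the extension still factors through $X$. Substituting $z=1/\xi$ then produces the sought extension $\widetilde{f}:A[0,R]\to X$. The main obstacle I foresee is not the Nevanlinna estimate itself but the bookkeeping around the projective representation: one has to normalize the $g_i$'s so that $[g_0:\cdots:g_N]$ makes sense globally as a rigid analytic map on $A[1/R,\infty]$, and then verify that the coefficient-by-coefficient meromorphic extensions at $\xi=\infty$ assemble into a genuine rigid analytic morphism using the explicit Laurent control produced above.
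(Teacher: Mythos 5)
The paper states this proposition without proof, importing it directly from Cherry--Ru \cite{Cherry-Ru2004}, so there is no in-paper argument to compare against; your reconstruction is correct and is essentially the Cherry--Ru proof, whose core is exactly your claim that $T_h(r)=O(\log r)$ characterizes meromorphy at infinity (finitely many poles from the counting function, then the coefficient bound $|a_n|r^n\leq Cr^K$ killing all terms with $n>K$). One small imprecision: the claim should be applied to the ratios $h_{ij}=g_i/g_j$ (whose characteristic functions you did bound), not to the individual $g_i$, after which one clears the common pole order in $w=1/\xi$ to get an analytic tuple with no common zero defining the extension into $\mathbb{P}^N$, and hence into the Zariski-closed $X$.
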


\section{\bf  Jet differentials and non-archimedean Big Picard Theorem}
\label{Section 3}

\subsection{Non-archimedean logarithmic Green-Griffiths $k$-jet bundles}

Let $\bF$ be an algebraically closed field, with characteristic zero, being complete with respect to a non-archimedian valuation $| \cdot |$. Let $X$ be a projective variety defined over $\bF$. For a point $x\in X$, consider the analytic germs $(\bF,0)\rightarrow (X,x)$. Two such germs are said to be equivalent if they have the same Taylor expansion up to order $k$ in some local coordinates around $x$. The equivalence class of an analytic germ $f\colon (\bF,0)\rightarrow (X,x)$ is called the {\sl $k$-jet of $f$}, denoted by $j_k(f)$, which is independent of the choice of local coordinates. A $k$-jet $j_k(f)$ is said to be {\sl regular} if $\dif f(0)\not=0$. For a given point $x\in X$, denote by $j_k(X)_x$ the vector space of all $k$-jets of analytic germs $(\bF,0)\rightarrow (X,x)$, set
\[
J_k(X)
:=
\underset{x\in X}{\medcup}\,J_k(X)_x,
\]
and consider the natural projection
\[
\pi_k\colon J_k(X)\rightarrow X.
\]
Then $J_k(X)$ carries the structure of an analytic fiber bundle over $X$, which is called the {\sl $k$-jet bundle over $X$}. Note that in general, $J_k(X)$ is not a vector bundle. When $k=1$, the $1$-jet bundle $J_1(X)$ is canonically isomorphic to the tangent bundle $T_X$ of $X$.

For an open subset $U\subset X$, for a section $\omega\in H^0(U,T_X^*)$, for a $k$-jet $j_k(f)\in J_k(X)|_U$, the pullback $f^*\omega$ is of the form $A(\xi)\dif\xi$ for some analytic function $A$, where $\xi$ is the global coordinate of $\bF$. Since each derivative $A^{(j)}$ ($0\leq j\leq k-1$) is well-defined, independent of the representation of $f$ in the class $j_k(f)$, the analytic $1$-form $\omega$ induces the analytic map
\begin{equation}
\label{trivialization-jet}
\tilde{\omega}
\colon
J_k(X)|_U
\rightarrow
\bF^k;\,\,j_k(f)\rightarrow
\big(A(z),A(z)^{(1)},\dots,A(z)^{(k-1)}
\big).
\end{equation}
Hence on an open subset $U$, a given local analytic coframe $\omega_1\wedge\dots\wedge\omega_n\not=0$ yields a trivialization 
\[
H^0(U, J_k(X))\rightarrow U\times(\bF^k)^n
\]
 by providing the following new $n k$ independent coordinates:
\[
\sigma\rightarrow(\pi_k\circ\sigma;\tilde{\omega}_1\circ\sigma,\dots,\tilde{\omega}_n\circ\sigma),
\]
where $\tilde{\omega}_i$ are defined as in \eqref{trivialization-jet}. The components $x_i^{(j)}$ ($1\leq i\leq n$, $1\leq j\leq k$) of $\tilde{\omega}_i\circ\sigma$ are called the {\sl jet-coordinates}.
In a more general situation where $\omega$ is a section over $U$ of the sheaf of meromorphic $1$-forms, the induced map $\tilde{\omega}$ is meromorphic.

Now, in the logarithmic setting, let $D\subset X$ be a normal crossing divisor on $X$. This means that at each point $x\in X$, there exist some local coordinates $z_1,\dots,z_{\ell},z_{\ell+1},\dots,z_n$ ($\ell=\ell(x)$) centered at $x$ in which $D$ is defined by
\[
D=
\{z_1\dots z_{\ell}=0
\}.
\]
Following Iitaka \cite{Iitaka1982}, the {\sl logarithmic cotangent bundle of $X$ along $D$}, denoted by $T_X^*(\log D)$, corresponds to the locally free sheaf generated by
\[
\dfrac{\dif\!z_1}{z_1},\dots,\dfrac{\dif\! z_{\ell}}{z_{\ell}},z_{\ell +1},\dots,z_n
\]
in the above local coordinates around $x$.

An analytic section $s\in H^0(U,J_k(X))$ over an open subset $U\subset X$ is said to be a {\sl logarithmic $k$-jet field} if $\tilde{\omega}\circ s$ are analytic for all sections $\omega\in H^0(U',T_X^*(\log D))$, for all open subsets $U'\subset U$, where $\tilde{\omega}$ are induced maps defined as in \eqref{trivialization-jet}. Such logarithmic $k$-jet fields define a
subsheaf of $J_k(X)$, and this subsheaf is itself a sheaf of sections of a analytic fiber bundle over $X$, called the {\sl logarithmic $k$-jet bundle over $X$ along $D$}, denoted by $J_k(X,-\log D)$ (see \cite{Noguchi1986}).

The group $\bF^*$ admits a natural fiberwise action defined as follows. For local coordinates 
\[
z_1,\dots,z_{\ell},z_{\ell+1},\dots,z_n\eqno\scriptstyle{(\ell=\ell(x))}
\]
centered at $x$ in which $D=\{z_1\dots z_{\ell}=0\}$, for any logarithmic $k$-jet field along $D$ represented by some germ $f=(f_1,\dots,f_n)$, if $\varphi_{\lambda}(z)=\lambda z$ is the homothety with ratio $\lambda\in \bF^*$, the action is given by
\[
\begin{cases}
\big(\log(f_i\circ \varphi_{\lambda})\big)^{(j)}
=
\lambda^j
\big(\log f_i\big)^{(j)}\circ\varphi_{\lambda} &
\quad
\scriptstyle{(1\,\leq\,i\,\leq\,\ell),}
\\
\big(f_i\circ \varphi_{\lambda}\big)^{(j)}
\quad\quad\,=
\lambda^j f_i^{(j)}\circ\varphi_{\lambda}
&
\quad
\scriptstyle{(\ell+1\,\leq\,i\,\leq\,n).}
\end{cases}
\]

Now we are in position to introduce the {\sl logarithmic Green-Griffiths $k$-jet bundle} \cite{Green-Griffiths1980} in non-archimedean setting. By a {\sl logarithmic jet differential of {\sl order} $k$ and {\sl degree}} $m$ at a point $x\in X$, we mean a polynomial $Q(f^{(1)},\dots,f^{(k)})$ on the fiber over $x$ of $J_k(X,-\log D)$ enjoying weighted homogeneity:
\[
Q(j_k(f\circ\varphi_{\lambda}))
=
\lambda^m
Q(j_k(f))
\eqno\scriptstyle{(\lambda\,\in\,\bF^*)}.
\]

Consider the symbols
\[
\dif^{j}\log z_i\eqno\scriptstyle{(1\,\leq\, j\,\leq\, k,\,1\,\leq\, i\,\leq\,\ell)}
\]
and
\[
\dif^{j}z_i\eqno\scriptstyle{(1\,\leq\, j\,\leq\, k,\,\ell\,+\,1\,\leq\, i\,\leq\,n)}.
\]
Set the weight of $\dif^{j}\log z_i$ or $\dif^{j}z_i$ to be $j$. Then
a logarithmic jet differential of order $k$ and weight $k$ along $D$ at $x$ is a weighted homogeneous polynomial of degree $m$ whose variables are these symbols. Denote by $E_{k,m}^{GG}T_X^*(\log D)_x$ be the vector space spanned by such polynomials and set
\[
E_{k,m}^{GG}T_X^*(\log D)
:=
\underset{x\in X}{\medcup}\,
E_{k,m}^{GG}T_X^*(\log D)_x.
\]
By Fa\`{a} di bruno's formula \cite{Constantine1996,Merker2015}, one can check that $E_{k,m}^{GG}T_X^*(\log D)$ carries the structure of a vector bundle over $X$, called {\sl logarithmic Green-Griffiths vector bundle}. A global section of $E_{k,m}^{GG}T_X^*(\log D)$ is called a {\sl logarithmic jet differential} of order $k$ and weight $m$ along $D$. Locally, a logarithmic jet differential form can be written as

{\footnotesize
	\begin{equation}
	\label{local expression of log jet}
	\underset{|\alpha_1|+2|\alpha_2|+\dots+k|\alpha_k|=m}
	{\sum_{\alpha_1,\dots,\alpha_k\in\mathbb{N}^n}}
	A_{\alpha_1,\dots,\alpha_k}
	\bigg(
	\prod_{i=1}^{\ell}
	\big(
	\dif\log z_i
	\big)^{\alpha_{1,i}}
	\prod_{i=\ell+1}^{n}
	\big(
	\dif z_i
	\big)
	^{\alpha_{1,i}}
	\bigg)
	\dots
	\bigg(
	\prod_{i=1}^{\ell}
	\big(
	\dif^k\log z_i
	\big)^{\alpha_{k,i}}
	\prod_{i=\ell+1}^{n}
	\big(
	\dif^kz_i
	\big)
	^{\alpha_{k,i}}
	\bigg),
	\end{equation}
}
where
\[
\alpha_{\lambda}
=
(\alpha_{\lambda,1},\dots,\alpha_{\lambda,n})
\in\mathbb{N}^n
\eqno
\scriptstyle{(1\,\leq\,\lambda\,\leq\, k)}
\]
are multi-indices of length 
\[
|\alpha_{\lambda}|
=
\sum_{1\leq i\leq n}
\alpha_{\lambda,i},
\]
and where $A_{\alpha_1,\dots,\alpha_k}$ are locally defined analytic functions.  As in the complex case, $E_{k,m}^{GG}T_X^*(\log D)$ admits the following natural filtration 

\begin{equation}
\label{filtration of Ekm}
\gr^{\bullet}E_{k,m}^{GG}T_X^*(\log D)
=
\underset{\ell_1+2\,\ell_2+\dots+k\,\ell_k\,=\,m}
{\bigoplus_{\ell_1,\,\ell_2,\,\dots,\,\ell_k\,\in\,\mathbb{N}}}
\sym^{\ell_1}T_X^*(\log D)
\otimes
\dots
\otimes
\sym^{\ell_k}T_X^*(\log D),
\end{equation}
where $\sym^{\ell_i}T_X^*(\log D)$ denote the symmetric powers of the logarithmic cotangent bundle.

We first obtain the following generalization of \cite[Proposition 4.2]{Cherry-Ru2004} for symmetric $1$-forms.

\begin{pro}
	\label{affinod covering and good express of jet}	
	Let $X$ be a nonsingular projective variety defined over $\bF$. Let $D$ be a simple normal crossing divisor on $X$. Then there exists a finite affinoid covering $\mathcal{U}$ of $X$ such that for any logarithmic jet differential form $\omega\in H^0(X,E_{k,m}^{GG}T_X^*(\log D)$, there exists a finite collection $\mathcal{R}$ of rational functions on $X$ such that on each $U\in\mathcal{U}$, one can write
	{\footnotesize
		\begin{equation}
		\label{local expression of jet differential via rational functions}
		\omega
		=
		\underset{|\alpha_1|+2|\alpha_2|+\dots+k|\alpha_k|=m}
		{\sum_{\alpha_1,\dots,\alpha_k\in\mathbb{N}^n}}
		\psi_{\alpha_1,\dots,\alpha_k}
		\bigg(
		\prod_{i=1}^{\ell_U}
		\big(
		\dif\log \phi_i
		\big)^{\alpha_{1,i}}
		\prod_{i=\ell_U+1}^{n}
		\bigg(
		\dfrac{\dif \phi_i}{\phi_i}
		\bigg)
		^{\alpha_{1,i}}
		\bigg)
		\dots
		\bigg(
		\prod_{i=1}^{\ell_U}
		\big(
		\dif^k\log \phi_i
		\big)^{\alpha_{k,i}}
		\prod_{i=\ell_U+1}^{n}
		\bigg(
		\dfrac{\dif^k\phi_i}{\phi_i}
		\bigg)
		^{\alpha_{k,i}}
		\bigg),
		\end{equation}
	}
	for some  $\phi_i\in\mathcal{R}$ and $\psi_{\alpha_1,\dots,\alpha_k}$ are rational functions which are all regular on $U$.
\end{pro}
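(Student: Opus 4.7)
The plan is to adapt the proof of~\cite[Proposition 4.2]{Cherry-Ru2004} from symmetric logarithmic forms to higher-order logarithmic jet differentials, with the extra jet symbols $d^j \log \phi_i$ and $d^j \phi_i / \phi_i$ for $j \geq 2$ handled by iterating differentiation and invoking Fa\`a di Bruno to repackage them in algebraic form. The covering $\mathcal{U}$ and a finite set of rational functions can be chosen uniformly in $\omega$, so the whole argument reduces to a one-time algebraic preparation followed by rigid-analytic GAGA.

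The first step is to build $\mathcal{U}$ together with adapted rational frames. Using that $X$ is nonsingular projective and that $D$ has simple normal crossings, I would cover $X$ by finitely many affine opens $\{V_\alpha\}$ and, on each $V_\alpha$, select rational functions $\phi_{\alpha,1}, \ldots, \phi_{\alpha,n} \in \bF(X)$ which are regular on $V_\alpha$, whose differentials $d\phi_{\alpha,1}, \ldots, d\phi_{\alpha,n}$ form an algebraic frame of $\Omega^1_X$ on $V_\alpha$, with $\phi_{\alpha,1}, \ldots, \phi_{\alpha,\ell_\alpha}$ cutting out the components of $D$ meeting $V_\alpha$, and with $\phi_{\alpha,\ell_\alpha+1}, \ldots, \phi_{\alpha,n}$ all invertible on $V_\alpha$; the last condition is secured by further shrinking $V_\alpha$ and adding finitely many extra opens to recover a covering of $X$. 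Passing to rigid analytifications and using the quasi-compactness of $X^{\mathrm{an}}$, I then refine each $V_\alpha^{\mathrm{an}}$ by a finite union of affinoid subdomains contained in it, yielding a finite affinoid covering $\mathcal{U}$ of $X^{\mathrm{an}}$.

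The second step checks that the weighted jet symbols attached to these rational frames generate $E_{k,m}^{GG}T_X^*(\log D)$ freely. On $U \in \mathcal{U}$ with $U \subset V_\alpha$, the one-forms $d\log \phi_{\alpha,i}$ for $i \leq \ell_\alpha$ together with $d\phi_{\alpha,i}/\phi_{\alpha,i}$ for $i > \ell_\alpha$ constitute a free $\mathcal{O}(U)$-basis of $T_X^*(\log D)|_U$. Iterating differentiation, the jet symbols $d^j \log \phi_{\alpha,i}$ and $d^j \phi_{\alpha,i}/\phi_{\alpha,i}$ provide jet coordinates on $J_k(X, -\log D)|_U$ in the sense of~\eqref{trivialization-jet}, and Fa\`a di Bruno's formula applied to the filtration~\eqref{filtration of Ekm} shows that weighted monomials of total weight $m$ in these symbols form a free generating system of $E_{k,m}^{GG}T_X^*(\log D)|_U$ over $\mathcal{O}(U)$, matching the right-hand side of~\eqref{local expression of jet differential via rational functions}.

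The third step invokes rigid-analytic GAGA on the projective variety $X$ applied to the coherent algebraic sheaf $E_{k,m}^{GG}T_X^*(\log D)$: every global analytic section $\omega$ is in fact algebraic. Restricting to $V_\alpha$ and thus to $U \subset V_\alpha$, $\omega$ then expands uniquely in the free basis constructed above as in~\eqref{local expression of jet differential via rational functions}, with coefficients $\psi_{\alpha_1,\dots,\alpha_k}$ that are rational functions on $X$ regular on $V_\alpha$, hence on $U$. Taking $\mathcal{R}$ to be the finite set of all the $\phi_{\alpha,i}$ completes the proof. I expect the main obstacle to be the first step, namely arranging simultaneously on each chart that the chosen rational functions cut out the divisor components, form an $\Omega^1_X$-frame, and have their non-divisor members invertible; this forces a careful shrinking and enlargement of the affine cover. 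Once that algebraic-geometric preparation is in place, the rest of the argument is essentially formal.
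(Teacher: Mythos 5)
Your proposal is correct and follows essentially the same route as the paper: choose a finite affine/affinoid cover with rational functions adapted to the normal crossings of $D$, express $\omega$ in the corresponding weighted jet monomials, and observe that the coefficients are rational and regular on each chart. The only cosmetic difference is that you arrange the non-divisorial frame functions $\phi_i$ to be invertible on each chart so that $\dif^j\phi_i/\phi_i$ is regular there, whereas the paper simply multiplies and divides by $\phi_i$ and absorbs the powers $\phi_i^{\alpha_{j,i}}$ into the regular coefficients $\psi_{\alpha_1,\dots,\alpha_k}$; your explicit appeal to rigid GAGA to get rational (rather than merely analytic) coefficients is a point the paper leaves implicit.
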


\begin{proof}
	By compactness argument, one can take a finite covering $\mathcal{U}$ of $X$ by affinoid subdomain of $X$ such that for any $U\in\mathcal{U}$, the local defining equation of $D$ is given by $D=\{z_{U,1}\cdots z_{U,\ell_U}=0\}$, where $z_{U,1},\dots z_{U,\ell_U}$ are parts of a local coordinates system on $U$. For each $U\in\mathcal{U}$, let $V_U$ be the associated affine open set on $X$. By~\eqref{local expression of log jet}, there are rational functions $\phi_i,\chi_{\alpha_1,\dots,\alpha_k}\in \mathcal{O}_X(V)$ such that
	{\footnotesize
		\begin{align}
		\label{local rational expression of log jet}
		\omega|_V
		&=
		\underset{|\alpha_1|+2|\alpha_2|+\dots+k|\alpha_k|=m}
		{\sum_{\alpha_1,\dots,\alpha_k\in\mathbb{N}^n}}
		\chi_{\alpha_1,\dots,\alpha_k}
		\bigg(
		\prod_{i=1}^{\ell_U}
		\big(
		\dif\log \phi_i
		\big)^{\alpha_{1,i}}
		\prod_{i=\ell_U+1}^{n}
		\big(
		\dif \phi_i
		\big)
		^{\alpha_{1,i}}
		\bigg)
		\dots
		\bigg(
		\prod_{i=1}^{\ell_U}
		\big(
		\dif^k\log \phi_i
		\big)^{\alpha_{k,i}}
		\prod_{i=\ell_U+1}^{n}
		\big(
		\dif^k\phi_i
		\big)
		^{\alpha_{k,i}}
		\bigg)\notag\\
		&=
		\underset{|\alpha_1|+2|\alpha_2|+\dots+k|\alpha_k|=m}
		{\sum_{\alpha_1,\dots,\alpha_k\in\mathbb{N}^n}}
		\chi_{\alpha_1,\dots,\alpha_k}
		\prod_{i=\ell_U+1}^{n}
		\big(
		\phi_i
		\big)^{\alpha_{1,i}}
		\dots
		\prod_{i=\ell_U+1}^{n}
		\big(
		\phi_i
		\big)^{\alpha_{k,i}}\times
		\notag\\
		&\times\bigg(
		\prod_{i=1}^{\ell_U}
		\big(
		\dif\log \phi_i
		\big)^{\alpha_{1,i}}
		\prod_{i=\ell_U+1}^{n}
		\bigg(
		\dfrac{\dif \phi_i}{\phi_i}
		\bigg)
		^{\alpha_{1,i}}
		\bigg)
		\dots
		\bigg(
		\prod_{i=1}^{\ell_U}
		\big(
		\dif^k\log \phi_i
		\big)^{\alpha_{k,i}}
		\prod_{i=\ell_U+1}^{n}
		\bigg(
		\dfrac{\dif^k\phi_i}{\phi_i}
		\bigg)
		^{\alpha_{k,i}}
		\bigg).
		\end{align}
	}
	Putting 
	\[
	\psi_{\alpha_1,\dots,\alpha_k}
	=
	\underset{|\alpha_1|+2|\alpha_2|+\dots+k|\alpha_k|=m}
	{\sum_{\alpha_1,\dots,\alpha_k\in\mathbb{N}^n}}
	\chi_{\alpha_1,\dots,\alpha_k}
	\prod_{i=\ell_U+1}^{n}
	\big(
	\phi_i
	\big)^{\alpha_{1,i}}
	\dots
	\prod_{i=\ell_U+1}^{n}
	\big(
	\phi_i
	\big)^{\alpha_{k,i}},
	\]
	and noting that the number of $U$ and $V$ are finite, one can conclude the proof.
\end{proof}

\subsection{Logarithmic directed manifolds and direct image formula}

A {\sl log--direct manifold} is a triple \[
(X,D,V),
\]
where $X$ is a projective manifold, $D$ is a simple normal crossing divisor on $X$ and $V$ is an analytic subbundle of the logarithmic tangent bundle $T_X(-\log D)$. Starting with a log--direct manifold $(X_0,D_0,V_0):=(X,D,T_X(-\log D))$, we then define $X_1:=\mathbb{P}(V_0)$ together with the natural projection $\pi_1:X_1\rightarrow X_0$. Setting $D_1:=\pi_1^*D_0$, so that $\pi_1$ becomes a log--morphism, $V_1\subset T_{X_1}(-\log D_1)$ is then defined by
\[
V_{1,(x,[v])}:=\{\xi\in T_{X_1,(x,[v])}(-\log D_1):\pi_*\xi\in \bF \cdot v\}.
\]
Any germ of a rigid analytic map $f:(\bF,0)\rightarrow (X\setminus D,x)$ can be lifted to $f^{[1]}: \bF\rightarrow X_1\setminus D_1$. By induction, we can construct on $X=X_0$ the {\sl Demailly-Semple tower}\,:
\[
(X_k,D_k,V_k)\rightarrow\dots \rightarrow(X_1,D_1,V_1)\rightarrow(X_0,D_0,V_0),
\] 
together with the projections $\pi_k:X_k\rightarrow X_0$. Denote by $\mathcal{O}_{X_k}(1)$ the tautological line bundle on $X_k$. Then the direct image $(\pi_k)_*\mathcal{O}_{X_k}(m)$ of $\mathcal{O}_{X_k}(m)=\mathcal{O}_{X_k}(1)^{\otimes m}$, denoted by $E_{k,m}T_X^*(\log D)$, is a locally free subsheaf of $E_{k,m}^{GG}T_X^*(\log D)$ generated by all polynomial operators in the derivatives up to order $k$, which are furthermore invariant under any change of parametrization $(\bF,0)\rightarrow (\bF,0)$. One can easily check the following
\begin{namedthm*}{Direct image formula}
	For any ample line bundle $\mathcal{A}$ on $X$, one has
	\begin{equation}
		\label{direct image formula}
	H^0\big(X,E_{k,m}T_X^*(\log D)\otimes\mathcal{A}^{-1}\big)\cong
	H_0\big(X_k,\mathcal{O}_{X_k}(m)\otimes\pi_k^*\mathcal{A}^{-1}\big).
	\end{equation}	
\end{namedthm*}

\subsection{Logarithmic Derivative Lemma for jet differentials}

\begin{thm}
	Let $X$ be a nonsingular projective variety defined over  $\bF$ and let $D$ be a simple normal crossing divisor on $X$. Let $\omega\in H^0(X,E^{GG}_{k,m}T_X^*(\log D))$ be a logarithmic jet differential of order $k$ and weight $m$ along $D$. Let $f$ be an analytic map from $A[r_1,r_2)$ to $X$. If 
	\[
	f^*\omega
	=\varphi(\xi)\big(\dif\xi\big)^m,
	\]
	where $\xi$ is the global coordinate of $\bF$, then
	\[
	|\varphi|_r
	\leq\dfrac{C}{r^m}\eqno\scriptstyle{(r_1\,\leq r\,<r_2)},
	\]
	for some constant $C$ which is independent of $r$.
\end{thm}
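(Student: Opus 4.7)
The plan is to reduce the estimate to the scalar Logarithmic Derivative Lemma already recalled in Section~\ref{section 2}, via the local rational normal form of $\omega$ supplied by Proposition~\ref{affinod covering and good express of jet}. First I would fix a finite affinoid cover $\{U_\lambda\}_{\lambda \in \Lambda}$ of $X$ and a finite family $\mathcal{R} = \{\phi_i\}$ of rational functions on $X$ for which $\omega$ admits, on each $U_\lambda$, an expression as in~\eqref{local expression of jet differential via rational functions} whose coefficients $\psi_{\lambda,\alpha}$ are rational on $X$ and regular on $U_\lambda$. Each composition $g_i := \phi_i \circ f$ is a meromorphic function on $A[r_1, r_2)$, and on the preimage $f^{-1}(U_\lambda)$ the pullback $f^*\omega = \varphi(\xi)(\dif\xi)^m$ unfolds as
\[
\varphi(\xi) \;=\; \sum_\alpha \psi_{\lambda,\alpha}\bigl(f(\xi)\bigr) \cdot M_{\lambda,\alpha}(\xi),
\]
where $M_{\lambda,\alpha}$ is a product of factors of the shape $(\log g_i)^{(j)}$ and $g_i^{(j)}/g_i$ of total weight $\sum_{i,j} j\,\alpha_{j,i} = m$ by the weighted-homogeneity of $\omega$.

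Next I would apply the scalar Logarithmic Derivative Lemma to each meromorphic $g_i$, obtaining the estimates $|(\log g_i)^{(j)}|_r \leq C_0/r^j$ and $|g_i^{(j)}/g_i|_r \leq 1/r^j$. Multiplying these and invoking the ultrametric inequality together with the weight identity yields
\[
|M_{\lambda,\alpha}|_r \;\leq\; \frac{C_1}{r^m}
\]
for a constant $C_1$ independent of $r$, $\lambda$, and $\alpha$. Since each coefficient $\psi_{\lambda,\alpha}$ is a rational function regular on the affinoid $U_\lambda$, its spectral norm $\|\psi_{\lambda,\alpha}\|_{U_\lambda}$ is finite. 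Provided the local estimates on the various $f^{-1}(U_\lambda)$ can be patched into a single global estimate for $|\varphi|_r$, the finiteness of $\Lambda$ will absorb these coefficients into one constant $C$, producing the required bound $|\varphi|_r \leq C/r^m$.

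The hard part is precisely this patching step, since the rational coefficients $\psi_{\lambda,\alpha}$ typically blow up outside $U_\lambda$ and one cannot naively estimate $|\psi_{\lambda,\alpha}\circ f|_r$ from any global formula. My proposed fix is to exploit that $\{f^{-1}(U_\lambda)\}_{\lambda\in\Lambda}$ forms a finite admissible covering of the annulus $A[r_1,r_2)$, and that the Gauss-type norm $|\cdot|_r$ equals the value at the Gauss point of the circle of radius $r$: the rigid analytic map $f$ sends this Gauss point to some specific $U_{\lambda(r)}$, on which the relevant $\psi_{\lambda(r),\alpha}\circ f$ is controlled by $\|\psi_{\lambda(r),\alpha}\|_{U_{\lambda(r)}}$. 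Taking the maximum over the finite index set $\Lambda$ closes the argument. Apart from this bookkeeping with the admissible cover, every other ingredient is an immediate consequence of weighted homogeneity and the scalar Logarithmic Derivative Lemma.
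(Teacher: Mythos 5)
Your proposal follows essentially the same route as the paper's proof: localize $\omega$ via Proposition~\ref{affinod covering and good express of jet}, bound each factor $(\log(\phi_i\circ f))^{(j)}$ and $(\phi_i\circ f)^{(j)}/(\phi_i\circ f)$ by the scalar Logarithmic Derivative Lemma, combine them with the ultrametric inequality and the weight identity, and absorb the regular coefficients $\psi_{\alpha_1,\dots,\alpha_k}$ into the constant using the finiteness of the cover. Your extra care with the Gauss point of radius $r$, used to select the affinoid on which $|\psi_{\alpha_1,\dots,\alpha_k}\circ f|_r$ is controlled by the sup norm, correctly fills in a patching step that the paper leaves implicit in the phrase ``Suppose that $f(\xi)\in U$ for some $U\in\mathcal{U}$.''
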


\begin{proof}
	Let $\mathcal{U}$ be an affinoid covering of $X$ as in Proposition~\ref{affinod covering and good express of jet}. Suppose that $f(\xi)\in U$ for some $U\in\mathcal{U}$. Then by~\eqref{local expression of jet differential via rational functions}, one has
	{\footnotesize
		\begin{align*}
		\varphi(\xi)
		&
		=
		\underset{|\alpha_1|+2|\alpha_2|+\dots+k|\alpha_k|=m}
		{\sum_{\alpha_1,\dots,\alpha_k\in\mathbb{N}^n}}
		\big(
		\psi_{\alpha_1,\dots,\alpha_k}
		\circ f
		\big)
		(\xi)
		\times\\
		&\times
		\bigg(
		\prod_{i=1}^{\ell_U}
		\big(
		\dif\log (\phi_i\circ f)(\xi)
		\big)^{\alpha_{1,i}}
		\prod_{i=\ell_U+1}^{n}
		\bigg(
		\dfrac{\dif (\phi_i\circ f)(\xi)}{(\phi_i\circ f)(\xi)}
		\bigg)
		^{\alpha_{1,i}}
		\bigg)
		\dots
		\bigg(
		\prod_{i=1}^{\ell_U}
		\big(
		\dif^k\log (\phi_i\circ f)(\xi)
		\big)^{\alpha_{k,i}}
		\prod_{i=\ell_U+1}^{n}
		\bigg(
		\dfrac{\dif^k(\phi_i\circ f)(\xi)}{(\phi_i\circ f)(\xi)}
		\bigg)
		^{\alpha_{k,i}}\bigg).
		\end{align*}
	}
	
	Hence
	
	{\footnotesize
		\begin{align*}
		|\varphi(\xi)|_r
		&
		\leq
		\underset{|\alpha_1|+2|\alpha_2|+\dots+k|\alpha_k|=m}
		{\max_{\alpha_1,\dots,\alpha_k\in\mathbb{N}^n}}
		\bigg\{
		|\big(
		\psi_{\alpha_1,\dots,\alpha_k}
		\circ f
		\big)
		(\xi)|_r
		\times\\
		&\times
		\left|
		\prod_{i=1}^{\ell_U}
		\big(
		\dif\log (\phi_i\circ f)(\xi)
		\big)^{\alpha_{1,i}}
		\prod_{i=\ell_U+1}^{n}
		\bigg(
		\dfrac{\dif (\phi_i\circ f)(\xi)}{(\phi_i\circ f)(\xi)}
		\bigg)
		^{\alpha_{1,i}}
		\bigg)
		\dots
		\bigg(
		\prod_{i=1}^{\ell_U}
		\big(
		\dif^k\log (\phi_i\circ f)(\xi)
		\big)^{\alpha_{k,i}}
		\prod_{i=\ell_U+1}^{n}
		\bigg(
		\dfrac{\dif^k(\phi_i\circ f)(\xi)}{(\phi_i\circ f)(\xi)}
		\bigg)
		^{\alpha_{k,i}}\right|_r
		\bigg\}.
		\end{align*}
	}
	Since $\psi_{\alpha_1,\dots,\alpha_k}$ are regular, and hence bounded, one can apply the Logarithmic Derivative Lemma to the remaining terms in the right hand side of the above inequality to obtain the desired estimate.
	
\end{proof}

\subsection{Proof of Theorem~\ref{big-Picard}}

	We mimic the reasoning of \cite[Theorem 6.1]{Cherry-Ru2004}.  Let $g:A[1/R,\infty)$  be the rigid analytic map obtained from $f$ by setting $g(\xi)=f(1/\xi)$.
	Since $\mathcal{A}$ is ample, one can take an integer $\ell$ large enough such that $\ell \mathcal{A}$ is very ample. By~Proposition~\ref{existence of very ample divisor meeting f often}, there exists another section $H$ in the linear system $|\ell\mathcal{A}|$ such that
	\[
	(1-\epsilon)T_g(r)\leq N_g(H,r)
	+
	O(\log r),
	\]
 and $\{g(\xi):f^*\omega(\xi)\not=0\}\not\subset H$. Let $\psi$ be a rational function on $X$ whose zero divisor is $H-\ell\mathcal{A}$. Then 
	\[
	\widetilde{\omega}:=\psi\,\omega^{\otimes \ell}
	\]
	is a logarithmic jet differential form of order $k$ and weight $\ell m$
	vanishing on $H$. Set
	\[
	g^*\widetilde{\omega}(\xi)
	=
	\varphi(\xi)(\dif\xi)^{\ell m}.
	\]
	Then $\varphi\not\equiv 0$ by the choice of $H$ and the assumption that $f^*\omega\not\equiv 0$. Furthermore, by the construction of $\widetilde{\omega}$, it is not hard to check that $N_g(H,r)\leq N_{\varphi}(0,r)$.
	Therefore
	\begin{align*}
	(1-\epsilon) T_g(H,r)
	&\leq N_g(H,r)+O(\log r)\\
	&\leq N_{\varphi}(0,r)
	+O(\log r)\\
	\text{[by the First Main Theorem]}\ \ \ \ \ &\leq \log|\varphi|_r
	+O(\log r).
	\end{align*}
	By the Logarithmic Derivative Lemma for logarithmic jet differential, one has $|\varphi|_r\rightarrow 0$ as $r\rightarrow\infty$. Hence, the above estimate yields $T_g(H,r)=O(\log r)$. We then apply Proposition~\ref{extension condition} to conclude.
\qed

\section{\bf Proofs of Theorems~\ref{thm-A},~\ref{thm-B}}
\label{Section 4}

\subsection{Transcendence of the uniformizing map}\label{non-arc_unif}

Let $R$ be a complete discrete valuation ring with fraction field $\bF$. Let $\pi$ be a generator of the maximal ideal of $R$ and $k = R/\pi R$ be the residue field.
Let $A$ be an abelian variety over $\bF$, with the  N\'eron model $\mathfrak{A}$  over $R$. Then $A \cong \mathfrak{A}_{\bF}$, the generic fiber of $\mathfrak{A}$. Let $\mathfrak{A}^0 \subset \mathfrak{A}$ be the identity component of the N\'eron model.
According to the Semi-stable Reduction Theorem {\cite[exp.IX~3.6]{SGA7v1}}, the closed fiber $\mathfrak{A}^0_k$ is a semi-abelian variety over $k$.
We say that $A$ is \emph{totally degenerate} if $\mathfrak{A}^0_k$ is a torus, i.e. $\mathfrak{A}^0_k \cong \mathbb{G}^g_{m,k}$, for $g = \mathrm{dim}\,A$.

We then consider the formal scheme $A^0_{\mathrm{for}}$ over the formal spectrum $\mathrm{Spf}\,R$, which is the formal completion of $\mathfrak{A}^0$ along the closed fiber $\mathfrak{A}^0_k$. On the other hand, we have the formal torus $\mathbb{G}^g_{m,\mathrm{for}}$ over $\mathrm{Spf}\,R$. Hence by the rigidity of tori {\cite[exp.IX~\S3]{SGA3II}}, the isomorphism between closed fibers can be lifted to
\[
\mathbb{G}^g_{m,\mathrm{for}} \to A^0_{\mathrm{for}}.
\]
Applying the functor of Raynaud generic fiber, we receive a rigid analytic map
\[
p:\, (\mathbb{G}^{\mathrm{an}}_{m,\bF})^g \to A^{\mathrm{an}}
\] 
between rigid analytic spaces over the field $\bF$, which is called the \emph{uniformizing map} of $A$.

Denote the split torus by $T = \mathbb{G}^g_{m,\bF}$, and the   uniformizing map by $p:\, T^{\mathrm{an}} \to A^{\mathrm{an}}$. Without risk of confusion, we also use $p$ to denote the group homomorphism
\[
p:\, T^{\mathrm{an}}(\bF) \to A^{\mathrm{an}}(\bF)
\]
between rigid analytic groups. Then the kernel $\Gamma:= \mathrm{Ker}(p) \subset T^{\mathrm{an}}(\bF)$ is a lattice. 

\begin{thm}[cf. Theorem~1.2 in \cite{BL91}]
	The quotient $T^{\mathrm{an}}/\Gamma$ is a rigid analytic space, and $p$ induces an isomorphism
	\[
	T^{\mathrm{an}}/\Gamma \xrightarrow{\cong} A^{\mathrm{an}}.
	\]  
\end{thm}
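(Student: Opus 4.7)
The plan is to follow the construction of Bosch–Lütkebohmert. The uniformizing map $p:\,T^{\mathrm{an}} \to A^{\mathrm{an}}$ was obtained by applying Raynaud's generic fibre functor to a morphism of formal groups $\mathbb{G}^g_{m,\mathrm{for}} \to A^0_{\mathrm{for}}$ which, by construction, is an isomorphism of formal $R$-schemes on the closed fibre. Consequently, $p$ restricts to an isomorphism between the unit polydisc $U \subset T^{\mathrm{an}}$ (the Raynaud generic fibre of $\mathbb{G}^g_{m,\mathrm{for}}$) and the admissible open subgroup $V \subset A^{\mathrm{an}}$ coming from $A^0_{\mathrm{for}}$. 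Translating by arbitrary elements of $T^{\mathrm{an}}(\bF)$ upgrades this to the statement that $p$ is a local isomorphism of rigid analytic groups everywhere.

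Once $p$ is known to be a local isomorphism, the kernel $\Gamma$ must be discrete in $T^{\mathrm{an}}(\bF)$ and meet $U(\bF)$ only at the identity. Thus $\Gamma$ injects into $T^{\mathrm{an}}(\bF)/U(\bF) \cong \ZZ^g$ via the specialization/tropicalization map. That the rank of $\Gamma$ is exactly $g$ follows from the properness of $A$: if $\Gamma$ had rank $<g$, the fibres of $p$ would fail to cut out a compact-like (in the sense of admitting a finite formal model) fundamental region, contradicting the fact that $A^{\mathrm{an}}$ is the rigid analytification of a proper variety. Making this compactness-like argument precise in the non-archimedean setting — using Raynaud's theory of admissible formal models — is the main technical hurdle in the proof.

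Equipped with a discrete $\Gamma$-action by local isomorphisms, the quotient $T^{\mathrm{an}}/\Gamma$ inherits naturally the structure of a rigid analytic group. One glues affinoid fundamental domains built from $U$ together with a suitable finite set of coset representatives at each level, using that every affinoid subdomain meets only finitely many of its $\Gamma$-translates. The map $p$ then descends to a morphism
\[
\bar p:\,T^{\mathrm{an}}/\Gamma \longrightarrow A^{\mathrm{an}},
\]
which is injective by construction of the quotient and étale because $p$ is a local isomorphism. For surjectivity one combines the openness of $\mathrm{im}(\bar p)$ (a consequence of $\bar p$ being étale) with the connectedness of $A^{\mathrm{an}}$ and the fact that, by formal GAGA together with the properness of $A$, the image is simultaneously closed. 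Hence $\bar p$ is an étale bijection between smooth rigid analytic groups of the same dimension, and therefore an isomorphism.
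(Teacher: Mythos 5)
This statement is not proved in the paper at all: it is quoted verbatim as Theorem~1.2 of Bosch--L\"utkebohmert \cite{BL91}, and the authors rely on that reference for the proof. So there is no internal argument to compare yours against; the only meaningful question is whether your sketch would stand on its own, and as written it does not.

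Two genuine gaps. First, your starting point is slightly wrong and hides a real step: the Raynaud generic fibre of $\mathbb{G}^g_{m,\mathrm{for}}$ is not the unit polydisc but the affinoid subgroup $\{|z_1|=\dots=|z_g|=1\}$ of $T^{\mathrm{an}}$, and the formal isomorphism $\mathbb{G}^g_{m,\mathrm{for}}\to A^0_{\mathrm{for}}$ therefore only gives $p$ on that affinoid subgroup. Extending it to a group homomorphism on all of $T^{\mathrm{an}}$ (so that your ``translate by elements of $T^{\mathrm{an}}(\bF)$'' step even makes sense) is itself a nontrivial part of the Bosch--L\"utkebohmert construction and cannot be waved through. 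Second, the assertion that $\Gamma$ has full rank $g$ --- equivalently, that $p$ is surjective and $T^{\mathrm{an}}/\Gamma$ is proper --- is the heart of the theorem, and you explicitly defer it (``the main technical hurdle'') rather than prove it. Your closing surjectivity argument also leans on it circularly: ``the image is closed by properness'' requires properness of the \emph{source} $T^{\mathrm{an}}/\Gamma$ or of the map, which you only get once full-rankness of $\Gamma$ is known; properness of the target $A$ alone does not close the image of an arbitrary \'etale map. The gluing of the quotient and the ``\'etale bijection of smooth groups is an isomorphism'' step are fine in outline, but as a proof the proposal reduces the theorem to exactly the points that \cite{BL91} is cited for.
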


Now consider an affine curve $X \subset T$ with a projective completion $\bar{X}$. Denote by $p_X$ the  composition
\begin{align}\label{p_X}
X^{\mathrm{an}} \hookrightarrow T^{\mathrm{an}} \xrightarrow{p} A^{\mathrm{an}}.
\end{align}
Our goal is to show 

\begin{thm}\label{non-extendable}
	The rigid analytic map $p_X$ is not extendable, i.e. it can not be extended to a rigid analytic map $\bar{p}_X:\, \bar{X}^{\mathrm{an}} \to A^{\mathrm{an}}$ between projective varieties.
\end{thm}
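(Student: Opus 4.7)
My plan is to assume that $\bar p_X$ exists and derive a contradiction from the fact that the uniformization $p\colon T^{\mathrm{an}} \to A^{\mathrm{an}}$ is locally a rigid-analytic isomorphism. Since $A^{\mathrm{an}} \cong T^{\mathrm{an}}/\Gamma$ by the Bosch--L\"utkebohmert theorem cited above and $\Gamma$ acts freely and discretely on $T^{\mathrm{an}}$ by translations, every $a \in A^{\mathrm{an}}$ admits an affinoid neighborhood $V$ on which $p$ has a rigid analytic section $s\colon V \to T^{\mathrm{an}}$, i.e.\ $p \circ s = \mathrm{id}_V$.

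First I localize at a boundary point of $X$. Pick $x_0 \in \bar X \setminus X$, which is nonempty because $\bar X$ is projective while $X = \bar X \cap T$ is affine. Passing to the normalization of $\bar X$ if necessary, a small closed disk $A[0,R] \subset \bar X^{\mathrm{an}}$ centered at $x_0$ meets $X^{\mathrm{an}}$ exactly in the punctured disk $A(0,R]$. After shrinking $R$, continuity of the putative extension $\bar p_X$ lets me assume $\bar p_X(A[0,R]) \subset V$, where $V$ is a trivializing neighborhood of $a := \bar p_X(x_0)$ equipped with section $s$ as above.

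Next I compare two rigid analytic lifts through $p$ of the restriction $p_X|_{A(0,R]}$: the natural inclusion $\iota\colon A(0,R] \hookrightarrow X^{\mathrm{an}} \hookrightarrow T^{\mathrm{an}}$, and the composition $\phi := s \circ \bar p_X|_{A(0,R]}$. Their group-theoretic ratio $\iota \cdot \phi^{-1}$ is a rigid analytic map from the connected set $A(0,R]$ into the discrete subgroup $\Gamma \subset T^{\mathrm{an}}$, hence a constant $\gamma \in \Gamma$. Thus $\iota = \gamma \cdot \phi$ on $A(0,R]$, and since $\phi$ extends rigid-analytically to the closed disk $A[0,R] \to T^{\mathrm{an}}$ (because $\bar p_X$ extends and $s$ is defined on all of $V$), the identity $\iota = \gamma \cdot \phi$ furnishes a rigid analytic extension of $\iota$ itself across $\xi = 0$.

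To reach the contradiction, observe that since $x_0 \notin T$, at least one of the coordinate functions $z_1,\dots,z_g$ on $T = \mathbb{G}_m^g$ or one of their inverses has a pole at $x_0$ when viewed as a meromorphic function on $\bar X$. Therefore $|z_i \circ \iota|_r$ or $|z_i^{-1}\circ \iota|_r$ is unbounded as $r \to 0$ along $A(0,R]$. On the other hand, if $\iota$ admits a rigid analytic extension to $A[0,R] \to T^{\mathrm{an}}$, then both $z_i \circ \iota$ and $z_i^{-1} \circ \iota$ are nowhere-vanishing analytic functions on the closed disk $A[0,R]$, and the Laurent-coefficient formula $|f|_r = \sup_n |a_n| r^n$ forces $|z_i^{\pm 1} \circ \iota|_r$ to be bounded on $[0,R]$, contradicting the pole. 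The step I expect to require the most care is the preliminary one: verifying that $\Gamma$ is genuinely discrete in $T^{\mathrm{an}}$ and that $p$ admits local analytic sections, both of which follow from the cited rigid uniformization theorem but must be checked with the correct rigid topology. Once these facts are in hand, the lift-comparison and the non-archimedean maximum principle furnish the contradiction with essentially no further work.
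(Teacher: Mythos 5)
Your argument is correct, but it is a genuinely different proof from the one in the paper. The paper argues globally: it first algebraizes the hypothetical extension $\bar{p}_X$ via rigid GAGA, so that $\bar{p}_X:\bar{X}\to W$ is a finite morphism of some degree $N$; it then covers $W^{\mathrm{an}}$ by finitely many admissible opens $U(w_i)$ over which $p$ trivializes and whose trace on $W$ is connected, bounds by $N$ the number of $\Gamma$-translates of a lift $\widetilde{W}_{U(w_i)}$ meeting $X^{\mathrm{an}}$, and concludes that $X^{\mathrm{an}}$ is contained in finitely many translates of a fundamental domain $\ell^{-1}(F)$, hence in a single affinoid $\ell^{-1}(S)$ --- contradicting the fact (Javanpeykar--Vezzani) that the analytification of a positive-dimensional algebraic variety is never contained in an affinoid. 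You instead argue locally at a boundary point: you lift the extension through the covering $p$ near the cusp, use the discreteness of $\Gamma$ and the connectedness of the punctured disk to show the two lifts differ by a constant $\gamma\in\Gamma$, and deduce that the inclusion $X^{\mathrm{an}}\hookrightarrow T^{\mathrm{an}}$ itself extends across the cusp, which is incompatible with the pole of a torus coordinate there. Both proofs rest on the same two inputs --- $p$ is a topological covering admitting local analytic sections, and $\Gamma$ is discrete (the paper's Lemma on the rational subdomains $R(x)$) --- but yours avoids GAGA, the finiteness of $\bar{p}_X$, the compactness covering of $W^{\mathrm{an}}$, and the external affinoid-non-containment result, so it is more elementary and is the exact non-archimedean analogue of the classical monodromy argument. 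Two points you should make explicit when writing it up: (i) that an admissible (affinoid) neighborhood of a smooth point of a rigid curve contains a closed coordinate disk, which justifies shrinking $R$ so that $\bar{p}_X(A[0,R])\subset V$; and (ii) that for an arbitrary projective completion $\bar{X}$ the boundary point $x_0$ really does lie outside $T$ --- this follows because $X$ is closed in $T$, so the morphism from the smooth projective model to $(\mathbb{P}^1)^g$ sends every point not lying over $X$ into $(\mathbb{P}^1)^g\setminus T$, whence some $z_i^{\pm 1}$ has a pole at $x_0$. Neither point is a gap, only a detail to record.
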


A proof will be reached at the end of the next subsection.

\subsection{A central topological argument}
\label{area argument}
Let $X(T) := \mathrm{Hom}_K(T,\mathbb{G}_{m,\bF})$ be the character group of the torus $T$.
 By the group homomorphism
\[
\ell:\, T^{\mathrm{an}} \to \mathrm{Hom}(X(T),\mathbb{R}), \quad x \mapsto (\ell(x):\, \chi \mapsto - \log |\chi(x)|),
\]
and by fixing a basis $\{ \chi_1, \dots, \chi_g \}$ of $X(T)$, we receive that
   $X(T) \cong \mathbb{Z}^g$, and
 $\mathrm{Hom}(X(T),\mathbb{R}) \cong \mathbb{R}^g$. Consequently, for a lattice $\Gamma\subset T^{\mathrm{an}}(\bF)$, its image $\ell(\Gamma)$ is also a lattice in $\mathbb{R}^g$ in the usual sense (cf. {\cite[\S6.4]{FvdP}}).\

Now assume that the valuation of $R$ is discrete, and that $- \log |\pi| =1$. Then $-\log|\cdot|:\, \bF^* \to \mathbb{Z}$ yields
\[
\ell:\, T^{\mathrm{an}}(\bF) \to \mathrm{Hom}(X(T), \mathbb{Z}) \cong \mathbb{Z}^g.
\]
In particular, $\ell(\Gamma) \subset \mathbb{Z}^g$. Note that for any cube $S \subset \mathbb{R}^g$, the preimage $\ell^{-1}(S)$ is a rational subdomain of $T^{\mathrm{an}}$.

\begin{lem}\label{ring}
	For any point $x \in T^{\mathrm{an}}$, one can find a rational subdomain $R(x)$ containing $x$ such that for any $\gamma \in \Gamma \setminus \{1\}$, there holds
	$
	\gamma \, R(x) \cap R(x) = \varnothing
	$.
\end{lem}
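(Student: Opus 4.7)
The idea is to pass through the tropicalization $\ell$ and reduce the disjointness of $\gamma R(x)$ and $R(x)$ to a separation statement for the lattice $\ell(\Gamma) \subset \mathbb{R}^g$. By Mumford's theory of non-archimedean uniformization of totally degenerate abelian varieties (cf.\ \cite[\S 6.4]{FvdP}), the restriction $\ell|_\Gamma$ is an injection onto a full-rank discrete lattice in $\mathbb{R}^g$; in particular, $\ell(\gamma)\neq 0$ for every $\gamma \in \Gamma \setminus \{1\}$.

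After extending scalars if necessary (harmless, since the Galois orbit of $x$ is finite), we may assume $x \in T^{\mathrm{an}}(\bF)$ with coordinates $x_i := \chi_i(x) \in \bF^*$. Next, we pick $r_i \in |\bF^*|$ with $r_i < |x_i|$ for each $i$, and set
\[
R(x) \,:=\, \bigl\{\, y \in T^{\mathrm{an}} : |\chi_i(y) - x_i| \leq r_i \text{ for all } i = 1, \dots, g \,\bigr\}.
\]
This is a Laurent (hence rational) subdomain of $T^{\mathrm{an}}$ containing $x$. Crucially, since $r_i < |x_i|$, the ultrametric inequality forces $|\chi_i(y)| = |x_i|$ for every $y \in R(x)$, so the tropical image
\[
\ell(R(x)) \,=\, \{\ell(x)\}
\]
collapses to a single point of $\mathbb{R}^g$.

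Since $\ell$ is a group homomorphism, for any $\gamma \in \Gamma \setminus \{1\}$ we obtain $\ell(\gamma R(x)) = \{\ell(\gamma) + \ell(x)\}$, which differs from $\{\ell(x)\} = \ell(R(x))$ by the injectivity of $\ell|_\Gamma$. Applying $\ell$ then gives $\gamma R(x) \cap R(x) = \varnothing$, as required.

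\textbf{Main obstacle.} The geometric content is strikingly clean thanks to the ultrametric inequality, which collapses the tropical image of a small Laurent subdomain to a single point --- a phenomenon with no archimedean counterpart that sidesteps the classical ``diameter of cube versus minimal lattice length'' balancing act. The only conceivable technical subtleties are the reduction to the case $x \in T^{\mathrm{an}}(\bF)$ by finite base change, and the availability of $r_i \in |\bF^*|$ satisfying $r_i < |x_i|$ so that $R(x)$ is genuinely a rational subdomain over $\bF$; both are routine in the given setting.
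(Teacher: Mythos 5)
Your proof is correct, and it follows the same basic route as the paper: reduce disjointness to a separation statement in $\mathbb{R}^g$ via the tropicalization $\ell$ and the injectivity of $\ell|_\Gamma$. The only difference is the choice of $R(x)$. The paper takes $R(x)=\ell^{-1}(S)$ for an $\varepsilon$-cube $S$ around $\ell(x)$ with $\varepsilon<1/2$, so that $S$ is disjoint from its translates by nonzero vectors of $\ell(\Gamma)\subset\mathbb{Z}^g$ (using the standing assumption in that subsection that the valuation is discrete and normalized). You instead take a small polydisc $\{|\chi_i(y)-x_i|\leq r_i\}$ with $r_i<|x_i|$ around the $\bF$-rational point $x$, and the ultrametric inequality collapses $\ell(R(x))$ to the single point $\ell(x)$, so disjointness only needs $\ell(\gamma)\neq 0$. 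Your variant is marginally more robust in that it uses only injectivity of $\ell|_\Gamma$ rather than discreteness of $\ell(\Gamma)$ in $\mathbb{R}^g$, whereas the paper's tube $\ell^{-1}(S)$ is defined uniformly around any point of $T^{\mathrm{an}}$ without invoking coordinates of a rational point. The scalar-extension caveat in your write-up is unnecessary: $\bF$ is already algebraically closed, so every point of the rigid space $T^{\mathrm{an}}$ is $\bF$-rational.
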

\begin{proof}
	In coordinates $\ell(x)=(\ell(x)_i) \in \mathrm{Hom}(X(T),\mathbb{R}) \cong \mathbb{R}^g$. Hence for small $0<\epsilon\ll 1$, the $\epsilon$-cube neighborhood $S := \{ t=(t_i) \in \mathrm{Hom}(X(T),\mathbb{R})\,;\,\, |t_i- \ell(x)_i| \leqslant \varepsilon,\,\text{for all}\, 1\,\leq\,i\,\leq\, g \}$ satisfies that,
	 for any $a \in \mathbb{Z}^g \setminus \{0\}$, the intersection
	$
	(a + S) \cap S 
	$ is empty.
	Hence $R(x) := \ell^{-1}(S)$ satisfies our requirement.
\end{proof}

Since $\ell(\Gamma)$ is a lattice in $\mathbb{R}^g$, we can choose a fundamental domain $F \subset \mathbb{R}^g$  of it. Define $R_F := \ell^{-1}(F)$, then it is clear that $T^{\mathrm{an}} = \bigcup_{\gamma \in \Gamma}\gamma \, R_F$.

We now  prove Theorem~\ref{non-extendable} by contradiction. Suppose that $p_X$ is extendable. Then by the rigid GAGA theorem, it must be induced from an algebraic morphism
\[
\bar{p}_X:\, \bar{X} \to A
\]
between projective $\bF$-schemes. Denote by $W := \bar{p}_X(\bar{X})$ the image curve in $A$. Then $\bar{p}_X:\, \bar{X} \to W$ is a finite morphism between curves. Denote the degree of $\bar{p}_X:\, \bar{X} \to W$ by $N \in \mathbb{Z}_{>0}$.
Note that we can assume from the beginning that $\bar{X}$, hence also $W$, are connected.

For \emph{any} $w \in W$, one can find some point $x \in p^{-1}(w) \cap R_F$. Choosing a rational subdomain $R(x)$ containing $x$ as in Lemma~\ref{ring}.
Note that its image $p(R(x))$ is an admissible open subset of $A^{\mathrm{an}}$ {\cite[\S6.4]{FvdP}}. Now we consider the intersection $p(R(x)) \cap W^{\mathrm{an}}$. Although $W$ is connected, this intersection might have several connected components. However, we can show that
\begin{lem}
	One can find an admissible open subset $U(w) \subset A^{\mathrm{an}}$ containing $w$ such that 
	\begin{itemize}
		\item[1)] $U(w) \subset p(R(x))$;
		\item[2)] $U(w) \cap W^{\mathrm{an}}$ has only one connected component.
	\end{itemize}
\end{lem}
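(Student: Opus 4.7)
The plan is to reduce the problem to an analytic statement about the affinoid $R(x)$ via Lemma~\ref{ring}. Since $\gamma R(x) \cap R(x) = \varnothing$ for every $\gamma \in \Gamma \setminus \{1\}$, the uniformizing map $p$ is injective on $R(x)$ and in fact restricts to an isomorphism of rigid analytic spaces $R(x) \xrightarrow{\sim} p(R(x))$ onto its image, which is admissible open in $A^{\mathrm{an}}$. Consequently, for any rational subdomain $R'(x) \subset R(x)$ containing $x$, the set $U(w) := p(R'(x))$ is admissible open in $A^{\mathrm{an}}$ with $w = p(x) \in U(w) \subset p(R(x))$, and $U(w) \cap W^{\mathrm{an}}$ is homeomorphic via $p$ to $R'(x) \cap p^{-1}(W^{\mathrm{an}})$. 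It therefore suffices to find a rational subdomain $R'(x) \subset R(x)$ containing $x$ such that $R'(x) \cap p^{-1}(W^{\mathrm{an}})$ is connected.

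Set $Z := R(x) \cap p^{-1}(W^{\mathrm{an}})$. Since $W$ is Zariski closed in $A$ of dimension one, $p^{-1}(W^{\mathrm{an}})$ is a closed analytic subset of $T^{\mathrm{an}}$ of pure dimension one, and hence $Z$ is a closed analytic subset of the affinoid $R(x)$ of dimension at most one. A one-dimensional affinoid has only finitely many irreducible, and hence only finitely many connected, components; I would label them $C_0, C_1, \ldots, C_r$, with $x \in C_0$. The closed analytic set $C_1 \cup \cdots \cup C_r$ does not contain $x$, so by standard rigid analytic topology I can choose a rational subdomain $R''(x) \subset R(x)$ containing $x$ that avoids it, yielding $R''(x) \cap Z = R''(x) \cap C_0$.

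The final step is to shrink $R''(x)$ further to a connected rational subdomain $R'(x) \ni x$ for which $R'(x) \cap C_0$ is connected, and then declare $U(w) := p(R'(x))$. Here I would appeal to the local description of the rigid analytic curve $C_0$ at the point $x$: its analytic germ at $x$ is a finite union of analytic branches, all passing through $x$, and any sufficiently small connected rational subdomain $R'(x) \ni x$ intersects $C_0$ exactly in the union of these local branches restricted to $R'(x)$, a union which is connected because of the common point $x$. The main obstacle I foresee is making this local analytic statement rigorous in the non-archimedean setting --- concretely, showing that once $R'(x)$ is small enough, the connected components of $R'(x) \cap C_0$ are in bijection with the analytic branches of $C_0$ at $x$. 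This should follow from the structure of the analytic local ring $\mathcal{O}_{C_0,x}$ together with Weierstrass Preparation, but it is the only delicate piece of the argument; the remainder is just bookkeeping combining Lemma~\ref{ring} with the finite decomposition of $Z$.
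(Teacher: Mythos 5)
Your overall strategy---reduce via Lemma~\ref{ring} to a statement inside the affinoid $R(x)$, decompose $R(x)\cap p^{-1}(W^{\mathrm{an}})$ into its finitely many connected components $C_0,\dots,C_r$, and isolate the one through $x$---is sound up to and including the construction of $R''(x)$. The difficulty is entirely in your final step. By insisting that $U(w)$ have the form $p(R'(x))$ for a \emph{rational subdomain} $R'(x)$, you are forced to prove that a point of a (possibly singular) rigid analytic curve admits arbitrarily small affinoid neighborhoods, cut out by ambient inequalities, whose trace on the curve is connected and consists exactly of the local analytic branches through $x$. That statement is true, but it is a genuine piece of local theory of non-archimedean curves (normalization, behaviour of $\{\,y\in C_0 : |f_i(y)|\le\epsilon\,\}$ as $\epsilon\to 0$, a compactness argument to discard components not meeting $x$), and as written you only assert that it ``should follow'' from Weierstrass preparation. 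As it stands this is a gap, and it carries the whole weight of your argument.

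The paper sidesteps the issue completely by observing that the lemma only requires $U(w)$ to be \emph{admissible open}, not affinoid and not of the form $p(R'(x))$: take any affinoid subdomain $U\subset p(R(x))$ containing $w$, write $U\cap W^{\mathrm{an}}=\bigsqcup_{i=1}^{s}W_i$ as a finite disjoint union of connected components (finiteness by Noetherianity of Tate algebras), say $w\in W_1$, and set $U(w):=U\setminus\bigl(\bigcup_{i=2}^{s}W_i\bigr)$. This is an analytic Zariski-open subset of an affinoid, hence admissible open, and $U(w)\cap W^{\mathrm{an}}=W_1$ is connected by construction. The same deletion trick rescues your argument exactly at the point where you have $C_0,\dots,C_r$: instead of shrinking to a rational subdomain, simply put $U(w):=p\bigl(R(x)\setminus(C_1\cup\dots\cup C_r)\bigr)$; then $U(w)\cap W^{\mathrm{an}}=p(C_0)$ is connected because $C_0$ already is, and no local analysis of $C_0$ at $x$ is needed. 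I recommend replacing your last paragraph by this one-line modification.
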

\begin{proof}
	Note that $p(R(x))$ is an admissible open set. Thus we can find an affinoid subdomain $U = \mathrm{Sp}\,B \subset p(R(x))$ which contains $w$. By the noetherian property of Tate algebras, we have 
	\[
	U \cap W^{\mathrm{an}} = \bigsqcup^s_{i =1} W_i
	\]
	where each $W_i$ is a connected component. One can assume that $w \in W_1$. Then we consider the analytic Zariski open subset $U \setminus \left( \bigcup^s_{i=2}W_i\right)$, which is an admissible open subset {\cite[\S2.3]{Con08}}. Then we define $U(w):= U \setminus \left( \bigcup^s_{i=2}W_i\right)$.
\end{proof}

Going back to the proof of Theorem~\ref{non-extendable}.
Denote by $W_{U(w)}:= W \cap U(w)$. Since $p:\, T^{\mathrm{an}} \to A^{\mathrm{an}}$ is a topological covering map, by shrinking $U(w)$, we may assume that the restriction $p|_{p^{-1}(U(w))}:\, p^{-1}U(w) \cap R(x) \to U(w)$ 
is an isomorphism. Therefore, we can find a lifting $\widetilde{W}_{U(w)}$ of $W_{U(w)}$ in $R(x)$. Observe that
\begin{itemize}
	\item[1)] $p^{-1}W_{U(w)} = \bigsqcup_{\gamma \in \Gamma} \gamma \, \widetilde{W}_{U(w)}$;
	\item[2)] $p^{-1}_XW_{U(w)} = X \cap p^{-1}W_{U(w)} = \bigsqcup_{\gamma \in \Gamma} (X \cap \gamma \, \widetilde{W}_{U(w)})$;
	\item[3)] $\# \pi_0(p^{-1}_XW_{U(w)}) = \# p^{-1}_X(w) \leq N$. 
\end{itemize}
Define
\[
\Sigma(w) := \{ \gamma \in \Gamma\,;\,\, X \cap \gamma \, \widetilde{W}_{U(w)} \neq \varnothing \}.
\]
Then observations 2) and 3) imply that 
\[
\# \Sigma(w) \leq N,
\]
for each $w \in W$.
By the compactness of $W^{\mathrm{an}}$, there exists a finite collection $\{U(w_i)\,;\,\, i =1,\dots,q\}$ with $W^{\mathrm{an}} \subset \bigcup^q_{i=1}U(w_i)$. 
\begin{lem}
	Fix an arbitrary $\gamma \in \Gamma$. Then $\gamma\,R_F \cap X^{\mathrm{an}} \neq \varnothing$ if and only if there exists some $w_i$ for $1 \leq i \leq q$ such that $X^{\mathrm{an}} \cap \gamma \, \widetilde{W}_{U(w_i)} \neq \varnothing$, or equivalently, $\gamma \in \bigcup^q_{i =1}\Sigma(w_i)$.
\end{lem}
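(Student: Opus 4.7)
The plan is to combine the disjoint decomposition $T^{\mathrm{an}} = \bigsqcup_{\gamma \in \Gamma}\gamma R_F$ (after replacing $F$ with a half-open fundamental domain, so that distinct $\Gamma$-translates of $R_F$ become genuinely disjoint) with the covering-space decomposition
\[
p^{-1}(W_{U(w_i)}) \;=\; \bigsqcup_{\delta \in \Gamma} \delta\,\widetilde{W}_{U(w_i)}
\]
recorded in item~1) preceding the lemma, which stems from the fact that $p\big|_{p^{-1}(U(w_i)) \cap R(x_i)}$ is an isomorphism onto $U(w_i)$. To make the two decompositions compatible, the preliminary step is to arrange once and for all the nested inclusion
\[
\widetilde{W}_{U(w_i)} \;\subset\; R(x_i) \;\subset\; R_F \qquad (i=1,\ldots,q).
\]
I would first choose each $x_i \in p^{-1}(w_i) \cap R_F$ with $\ell(x_i)$ in the interior of $F$ (replacing $F$ by a nearby half-open fundamental domain if needed), and then shrink the cube parameter $\varepsilon$ in the construction of $R(x_i)$ from Lemma~\ref{ring} so that the $\ell$-image of $R(x_i)$ lies inside $F$.

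Granted this inclusion, the $(\Leftarrow)$ direction is immediate: if $\gamma \in \Sigma(w_i)$ for some $i$, then by definition $X^{\mathrm{an}} \cap \gamma\widetilde{W}_{U(w_i)} \neq \varnothing$, and since $\gamma\widetilde{W}_{U(w_i)} \subset \gamma R_F$ this already gives $X^{\mathrm{an}} \cap \gamma R_F \neq \varnothing$. For the $(\Rightarrow)$ direction, pick $y \in X^{\mathrm{an}} \cap \gamma R_F$. Then $p(y) \in p(X^{\mathrm{an}}) \subset W^{\mathrm{an}} \subset \bigcup_i U(w_i)$, so $p(y) \in W_{U(w_i)}$ for some $i$, and the covering decomposition above produces a unique $\delta \in \Gamma$ with $y \in \delta\widetilde{W}_{U(w_i)}$. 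By the preliminary inclusion, this forces $y \in \delta R_F$ as well; combined with $y \in \gamma R_F$ and the disjointness of $\Gamma$-translates of $R_F$, it follows that $\delta = \gamma$, so $\gamma \in \Sigma(w_i)$.

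The main obstacle, and essentially the only technical point, is the preliminary inclusion $R(x_i) \subset R_F$: the fundamental domain $R_F = \ell^{-1}(F)$ and the rational subdomain $R(x_i)$ from Lemma~\ref{ring} are built from two different cubes in $\mathrm{Hom}(X(T),\mathbb{R}) \cong \mathbb{R}^g$, namely $F$ and the $\varepsilon$-cube around $\ell(x_i)$. Verifying that one can simultaneously choose a half-open fundamental domain $F$ for the lattice $\ell(\Gamma) \subset \mathbb{Z}^g$ together with the cube parameters so that each $\varepsilon$-cube around $\ell(x_i)$ lies in the interior of $F$ is a routine but indispensable piece of elementary lattice geometry in $\mathbb{R}^g$, and once it is done the rigid-geometric bookkeeping above goes through without further subtlety.
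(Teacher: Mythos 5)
Your proposal follows the same route as the paper's own proof: the $(\Leftarrow)$ direction is immediate, and for $(\Rightarrow)$ one pushes a point of $\gamma\,R_F \cap X^{\mathrm{an}}$ into some $W_{U(w_i)}$, pulls it back along the disjoint decomposition $p^{-1}W_{U(w_i)} = \bigsqcup_{\gamma' \in \Gamma}\gamma'\,\widetilde{W}_{U(w_i)}$, and identifies the resulting $\gamma'$ with $\gamma$. The only difference is that you make explicit the normalization $\widetilde{W}_{U(w_i)} \subset R(x_i) \subset R_F$ together with the disjointness of the $\Gamma$-translates of $R_F$ that is needed to force $\gamma' = \gamma$ --- a step the paper's proof leaves entirely implicit --- and your sketched arrangement of the fundamental domain and cube parameters is a sound way to secure it.
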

\begin{proof} 
	($\Leftarrow$) is clear. Now we show ($\Rightarrow$).
	Suppose $\gamma\,R_F \cap X^{\mathrm{an}} \neq \varnothing$.  Then we can find some $w_i$ such that $p(\gamma\,R_F \cap X^{\mathrm{an}}) \cap U(w_i) = p(\gamma\,R_F \cap X^{\mathrm{an}}) \cap W_{U(w_i)}$ is nonempty. 
	Therefore the preimage $(\gamma\,R_F \cap X^{\mathrm{an}}) \cap p^{-1}W_{U(w_i)}$ is also nonempty, and thus
        \[
	(\gamma\,R_F \cap X^{\mathrm{an}}) \cap p^{-1}W_{U(w_i)} = \bigsqcup_{\gamma' \in \Gamma} (\gamma\,R_F \cap X^{\mathrm{an}}) \cap \gamma'\, \widetilde{W}_{U(w_i)} \neq \varnothing.
	\]
	This implies that $\gamma\,\widetilde{W}_{U(w_i)} \cap X^{\mathrm{an}} \neq \varnothing$.
\end{proof}

\begin{proof}[End of the proof of Theorem~\ref{non-extendable}]
By the above Lemma, $X^{\mathrm{an}}$ is contained in a finite union of translates of $R_F$. Thus we can find an affinoid subdomain $U \subset T^{\mathrm{an}}$ such that $X^{\mathrm{an}} \subset U$. Indeed, for instance, we can take a sufficiently large cube $S \subset \mathbb{R}^g$ and we choose $U$  to be the rational subdomain $\ell^{-1}(S)$. But this is impossible since $X^{\mathrm{an}}$ is the analytification of an algebraic curve {\cite[Proposition~2.8]{JV18}}.
\end{proof}

\subsection{Jets projection on abelian varieties and stabilizers of subvarieties}\label{jet-proj}

Denote by
\[
\pi_k:\, A_k \to A_0 = A
\]
the $k$-th projection in the Demailly-Semple tower associated to the abelian variety $A$ of dimension $g$. Then the flatness of the geometry of $A$ yields that
\[
A_k = A \times \mathcal{R}_{g,k},
\]
where $\mathcal{R}_{g,k}$ is a general fiber of $\pi_k$.

Now we consider a rigid analytic map $f:\, C^{\mathrm{an}} \to A^{\mathrm{an}}$ from a smooth quasi-projective curve $C$ to $A$. Let $Z$ be the Zariski closure of the image  $f(C^{\mathrm{an}})$ in $A$. Denote by 
\[
f_k:\, C^{\mathrm{an}} \to A^{\mathrm{an}}_k
\]
the $k$-th lifting of $f$. Let $Z_k \subset A_k$ be the Zariski closure of the image of $f_k$. The composition map
\[
\tau_k:\, Z_k \hookrightarrow A_k \twoheadrightarrow \mathcal{R}_{g,k}
\]
is called the {\it jets projection} on $Z_k$. 
There is only one alternative holds:
\begin{itemize}
\item for every $k \geqslant 1$, the general fiber of $\tau_k$ has positive dimension;
\item there exists a positive integer $k$ such that $\tau_k$ is generically finite onto its image.
\end{itemize}
For the first case, by the same argument as in {\cite[Proposition~5.3]{PS14}}, we have 
\begin{pro}\label{case-1}
Suppose that for each $k \geqslant 1$, the general fiber of $\tau_k$ has positive dimension. Then the dimension of the stabilizer
\[
\mathrm{Stab}(Z) := \{a \in A\,;\,\, a + Z =Z \}
\] 
of $Z$
is strictly positive.
\end{pro}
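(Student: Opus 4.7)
The proof should parallel \cite[Proposition~5.3]{PS14}. The argument there rests on two features of an abelian variety $A$ that are purely algebraic and hence persist unchanged over a non-archimedean base: (i) the trivialization $A_k = A \times \mathcal{R}_{g,k}$ of the Demailly-Semple tower, coming from the triviality of $T_A$; and (ii) the compatibility of this splitting with the translation action of $A$ on $A_k$, which acts only on the first factor $A$. With these in hand, the core of the argument becomes: extract, from the non-finiteness of the jet projections, a positive-dimensional subvariety $F \subset Z$ whose difference set $F - F$ lies in $\mathrm{Stab}(Z)$.

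\textbf{Step 1.} Apply upper semicontinuity of fiber dimension to $\tau_k: Z_k \to \tau_k(Z_k)$. The hypothesis that the general fiber of $\tau_k$ has positive dimension then upgrades to the statement that \emph{every} fiber $F_k(r) := \tau_k^{-1}(r)$ with $r \in \tau_k(Z_k)$ has positive dimension. Under the splitting $A_k = A \times \mathcal{R}_{g,k}$, each $F_k(r)$ is naturally identified with a positive-dimensional closed subvariety of $Z \subset A$.

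\textbf{Step 2.} The Demailly-Semple projections $A_{k+1} \to A_k$ induce compatible surjections $\tau_{k+1}(Z_{k+1}) \twoheadrightarrow \tau_k(Z_k)$, and whenever $r_{k+1} \mapsto r_k$ one has the inclusion $F_{k+1}(r_{k+1}) \subset F_k(r_k)$. Pick a compatible sequence $(r_k)_{k\geqslant 1}$. The resulting decreasing chain of Zariski-closed subvarieties of $Z$ stabilizes, by noetherianity, from some index $k_0$ onwards to a positive-dimensional subvariety $F \subset Z$.

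\textbf{Step 3.} For any pair $a, a' \in F$, both $(a, r_k)$ and $(a', r_k)$ lie in $Z_k$ for every $k \geqslant k_0$. The compatibility of these jets across all orders means that the formal germ of $Z$ at $a$ is transported to the formal germ at $a'$ by the fiberwise translation $(x, r) \mapsto (x + (a' - a), r)$ on $A_k = A \times \mathcal{R}_{g,k}$, which is the lift of the translation by $a' - a$ on $A$. Arguing as in \cite{PS14}, the algebraicity of $Z$ promotes this formal identification into the global statement $(a' - a) + Z = Z$, that is, $a' - a \in \mathrm{Stab}(Z)$. Consequently $F - F \subset \mathrm{Stab}(Z)$; since $F$ is positive-dimensional, so is $F - F$, and hence $\dim \mathrm{Stab}(Z) > 0$.

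\textbf{Main obstacle.} Step 3 is the technical heart of the argument, and it is so already in the complex setting of \cite{PS14}. One must convert the mere coincidence of all finite-order jet directions at two points $a, a' \in F$ into the global invariance $(a' - a) + Z = Z$. This is a formal-rigidity / Zariski-density assertion for subvarieties of abelian varieties and is insensitive to whether one works over $\mathbb{C}$ or over our non-archimedean field $\bF$: the ingredients are the product structure of $A_k$, the translation equivariance of that structure, and the algebraicity of $Z$, all of which are preserved by base change. For this reason the proof of Proposition~\ref{case-1} reduces to a verbatim transcription of the complex-analytic argument, without needing any genuinely non-archimedean input beyond what has already been set up in the preceding sections.
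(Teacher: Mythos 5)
Your Steps 1--2 are fine in outline, but Step 3 contains a genuine gap, and it is precisely the step that carries the whole argument. From $(a,r_k)\in Z_k$ and $(a',r_k)\in Z_k$ for all $k$ you may only conclude that there is a single compatible system of jets $r=(r_k)_k$, i.e.\ one formal arc $\hat\gamma$, such that $a+\hat\gamma$ and $a'+\hat\gamma$ are both formally contained in $Z$ (using $Z_k\subset J_k(Z)$ and the translation-equivariance of the splitting $A_k=A\times\mathcal{R}_{g,k}$). When $\dim Z>1$ --- which occurs in both applications of the proposition in this paper --- a single formal arc through $a$ does not determine the formal germ of $Z$ at $a$, so your assertion that ``the formal germ of $Z$ at $a$ is transported to the formal germ at $a'$'' does not follow, and the subsequent appeal to algebraicity has nothing to act on. Relatedly, in Step 2 you choose an \emph{arbitrary} compatible sequence $(r_k)$; such a sequence need not integrate to anything beyond a formal arc, so even the weaker conclusion ``$a'-a+Z\subset Z$'' cannot be extracted from it.

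The argument of \cite[Proposition~5.3]{PS14} (and its ancestors in Kawamata--Ochiai and Noguchi--Winkelmann--Yamanoi) avoids this by taking $r_k=\tau_k(f_k(t))$ for a \emph{generic parameter} $t$ of the analytic curve $f:C^{\mathrm{an}}\to A^{\mathrm{an}}$ whose image is Zariski dense in $Z$. Then $f(t)$ itself lies in every fiber $F_k(r_k)$, and for $a$ in the stabilized fiber $F$ the point $(a,r_k)$ is the $k$-jet at $t$ of the translated \emph{analytic} curve $g:=f+(a-f(t))$. Since every polynomial vanishing on $Z$ then has all derivatives along $g$ vanishing at $t$, the identity theorem for rigid analytic functions on the connected curve $C^{\mathrm{an}}$ gives $g(C^{\mathrm{an}})\subset Z$, and the Zariski density of $f(C^{\mathrm{an}})$ in $Z$ upgrades this to $(a-f(t))+Z\subset Z$, hence $a-f(t)\in\mathrm{Stab}(Z)$ (the set $\{b:\,b+Z\subset Z\}$ being a closed subsemigroup, hence a subgroup). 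This is where the analytic curve and the identity theorem enter irreducibly; your formulation, which works only with the algebraic jet spaces $Z_k$ and formal data, cannot reach the global invariance. You should rewrite Steps 2--3 with $r_k=\tau_k(f_k(t))$ and route the conclusion through $f$ as above.
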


For the second case, we have
\begin{pro}\label{case-2}
Let $k$ be a positive integer such that the jet projection
\[
\tau_k:\, Z_k \to \mathcal{R}_{g,k}
\] 
is generically finite onto its image. Then there exists a jet differential $\omega \in H^0(Z, E_{k,m}T^*_Z \otimes \mathcal{A}^{-1})$
for some ample line bundle $\mathcal{A}$ over $Z$ and some positive integer $m$. 
\end{pro}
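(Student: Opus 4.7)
The plan is to use the direct image formula together with the triviality of $T_A$, which furnishes the product splitting $A_k = A \times \mathcal{R}_{g,k}$, in order to convert the generic finiteness hypothesis on $\tau_k$ into bigness of a suitable weighted tautological line bundle on the jet tower, from which the desired jet differential will be extracted.

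First, by the direct image formula recalled earlier in the excerpt (applied to a desingularization of $Z$ and extended to the setting of singular subvarieties in the usual way), producing a nontrivial element of $H^0\bigl(Z, E_{k,m}T^*_Z \otimes \mathcal{A}^{-1}\bigr)$ is equivalent to producing a nontrivial global section of $\mathcal{O}_{Z_k}(m) \otimes \pi_k^* \mathcal{A}^{-1}$ on $Z_k$, where we identify $Z_k$ with the portion of the Demailly--Semple tower of $Z$ it coincides with. This shifts the problem from jet differentials on $Z$ to global sections of line bundles on the jet tower.

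Second, since $T_A$ is trivial, $A_k = A \times \mathcal{R}_{g,k}$, and the tautological line bundle $\mathcal{O}_{A_k}(1)$, as well as all weighted tautological bundles $\mathcal{O}_{A_k}(a_1,\ldots,a_k)$, are pulled back from $\mathcal{R}_{g,k}$ via the second projection, which agrees with $\tau_k$ on $Z_k$. By Demailly's classical construction applied to the trivial bundle of rank $g$, one can choose a weight vector $(a_1,\ldots,a_k)$ with $a_j$ decreasing sufficiently fast so that the line bundle $L := \mathcal{O}_{\mathcal{R}_{g,k}}(a_1,\ldots,a_k)$ is big on a suitable birational modification of $\mathcal{R}_{g,k}$. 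This bigness statement is purely algebro-geometric in characteristic zero, hence valid over our non-archimedean field $\bF$.

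Third, the hypothesis that $\tau_k$ is generically finite onto its image gives the dimension equality $\dim \tau_k(Z_k) = \dim Z_k$, so $\tau_k^* L$ is big on $Z_k$. Consequently, for every sufficiently large integer $s$, the twisted line bundle $\tau_k^* L^{\otimes s} \otimes \pi_k^* \mathcal{A}^{-1}$ has a nonzero global section on $Z_k$. Translating back through the direct image formula yields the required jet differential $\omega \in H^0\bigl(Z, E_{k,m}T^*_Z \otimes \mathcal{A}^{-1}\bigr)$ with $m = s(a_1 + 2a_2 + \cdots + k a_k)$.

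The main obstacle I anticipate lies in the compatibility between $Z_k$, defined in the excerpt as the Zariski closure of $f_k(C^{\mathrm{an}})$, and the intrinsic Demailly--Semple tower of $Z$ that underlies the direct image formula; handling this requires passing to an appropriate desingularization and checking that both the bigness and the generic finiteness descend/ascend along the comparison map. A secondary concern is verifying Demailly's bigness construction for weighted tautological bundles in the non-archimedean setting, but because its proof reduces to positivity of certain Schur-type classes on iterated projective bundles, it carries over unchanged to any algebraically closed base field of characteristic zero.
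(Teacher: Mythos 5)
Your overall route coincides with the paper's: direct image formula to reduce to sections of a twisted tautological bundle on $Z_k$, Demailly's bigness result for the tautological bundle on $\mathcal{R}_{g,k}$ transferred to $\bF$ by the Lefschetz principle, generic finiteness of $\tau_k$, and a Kodaira-type argument to absorb $\pi_k^*\mathcal{A}^{-1}$. However, there is a genuine gap at the step ``the hypothesis that $\tau_k$ is generically finite onto its image gives $\dim\tau_k(Z_k)=\dim Z_k$, so $\tau_k^*L$ is big on $Z_k$.'' This implication is false in general: the pullback of a big line bundle under a morphism that is generically finite onto its image need not be big, because the image may sit inside the augmented base locus (non-ample locus) of $L$. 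For instance, the pullback of a big line bundle on a surface to a curve contained in its stable base locus can have degree zero or be negative. Dimension counting alone cannot rule this out.

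What closes the gap, and what the paper's proof actually uses from \cite[Theorem~6.8]{Dem97}, is not merely that $\mathcal{O}(1)$ is big on $\mathcal{R}_{g,k}$ but the accompanying statement that for every $m\geqslant 1$ the base locus of $|\mathcal{O}(m)|$ avoids the \emph{regular part} of $\mathcal{R}_{g,k}$ (the locus where the $1$-jet does not vanish), together with the observation that $\tau_k(Z_k)$ must meet this regular part --- since $Z_k$ is the Zariski closure of the lift $f_k$ of a nonconstant analytic curve, whose first derivative is generically nonzero. Only then does restriction of sections of $|\mathcal{O}(m)|$ to $\tau_k(Z_k)$ remain nontrivial, so that $\tau_k^*\mathcal{O}(1)$ is big on $Z_k$. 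You should add this base-locus control (or an equivalent statement that $\tau_k(Z_k)\not\subset\mathbf{B}_+(L)$) to make the third step valid; the rest of your argument, including the reduction via the direct image formula and the concern about comparing $Z_k$ with the intrinsic Demailly--Semple tower of a desingularization of $Z$, is in line with the paper.
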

\begin{proof}
Denote by $\mathcal{O}(1)$ the tautological line bundle on $\mathcal{R}_{g,k}$. By  {\cite[page~31, Theorem 6.8]{Dem97}}, and by the
 Lefschetz principle,
 we know that $\mathcal{O}(1)$ is a big line bundle, and that  for every $m\geqslant 1$  the base locus of  $|\mathcal{O}(m)|$
avoids the {\it regular part}, defined by $1$-jet non-vanishing, of $\mathcal{R}_{g, k}$
. Since the jet projection $\tau_k$ is generically finite onto its image, which must have nonempty intersection with the {\it regular part},  $ \tau^*_k\mathcal{O}(1)
$
is a big line bundle on $Z_k$. Thus the statement follows from the Kodaira lemma and the direct image formula~\eqref{direct image formula}.
\end{proof}

\subsection{Non-archimedean Ax-Lindemann Theorem}
Our goal in this subsection is to prove~ Theorem~\ref{thm-A}. The proof goes first by treating the $1$-dimensional case. The general case then follows by some reductions.
We first recall the following terminology of~\cite{Nog18}.

\begin{defi}
Let $\gamma:\,X^{\mathrm{an}} \to A^{\mathrm{an}}$ be a rigid analytic map from a smooth quasi-projective variety $X$ to an abelian variety $A$. Then $\gamma$ is called \emph{strictly transcendental} if for every abelian subvariety $B \subsetneq A$ and for every projective compactification  $\bar{X}$ of $X$, the composition $X^{\mathrm{an}} \xrightarrow{\gamma} A^{\mathrm{an}} \twoheadrightarrow (A/B)^{\mathrm{an}}$ is either constant, or it can not be extended to $\bar{X}^{\mathrm{an}} \to (A/B)^{\mathrm{an}}$.  
\end{defi}

When the source $X$ is of $1$-dimension, such strictly transcendental maps enjoy the following property.

\begin{thm}\label{translate}
	Let $X$ be a smooth quasi-projective curve.
	Suppose that $\gamma:\, X^{\mathrm{an}} \to A^{\mathrm{an}}$ is a strictly transcendental map. Then the Zariski closure $\mathrm{Zar}(\gamma(X^{\mathrm{an}}))$ of the image of $\gamma$ is a translate of some abelian subvariety.  
\end{thm}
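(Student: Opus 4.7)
The plan is to use the jet projection dichotomy of Subsection~\ref{jet-proj} to produce a nontrivial jet differential on the Zariski closure of the image after quotienting by the stabilizer, then invoke the Big Picard Theorem~\ref{big-Picard} to derive an extension that contradicts strict transcendence—unless the quotient has already collapsed to a point, in which case the desired translate-of-abelian-subvariety structure follows directly.

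First I would set $Z := \mathrm{Zar}(\gamma(X^{\mathrm{an}}))$ and $B := \mathrm{Stab}(Z)^0$, let $\pi:\, A \to A/B$ be the quotient, and put $\bar Z := \pi(Z)$, $\bar\gamma := \pi \circ \gamma$. A direct check gives $\mathrm{Stab}(\bar Z)^0 = 0$ in $A/B$. If $B = A$, then $Z = A$ is itself an abelian subvariety and we are done; so assume $B \subsetneq A$, and reduce to showing $\bar Z$ is a single point, which will force $Z \subset b + B$ for some $b \in A$, and then $B$-invariance of $Z$ yields $Z = b + B$, as desired.

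Arguing by contradiction, suppose $\dim \bar Z \geq 1$; then $\bar\gamma$ is non-constant and $\bar\gamma(X^{\mathrm{an}})$ is Zariski-dense in $\bar Z$. Apply the dichotomy to $\bar\gamma$: since $\mathrm{Stab}(\bar Z)^0 = 0$, Proposition~\ref{case-1} rules out the first alternative, so Proposition~\ref{case-2} supplies integers $k, m \geq 1$, an ample line bundle $\mathcal{A}$ on $\bar Z$, and a jet differential $\omega \in H^0(\bar Z, E_{k,m}T^*_{\bar Z} \otimes \mathcal{A}^{-1}) \subset H^0(\bar Z, E^{GG}_{k,m}T^*_{\bar Z} \otimes \mathcal{A}^{-1})$. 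Next I would pick a resolution $\sigma:\, \tilde Z \to \bar Z$ with snc exceptional divisor $E$, pull $\omega$ back to a logarithmic jet differential $\tilde\omega$ on $\tilde Z$ with values in $\sigma^*\mathcal{A}^{-1}$, and—since $\sigma^*\mathcal{A}$ is only big-and-nef—apply a big-equals-ample-plus-effective decomposition of some tensor power $\sigma^*\mathcal{A}^{\otimes N} \cong \mathcal{A}' \otimes \mathcal{O}(D')$ to convert $\tilde\omega^{\otimes N}$ into a section valued in $(\mathcal{A}')^{-1}$ for the ample line bundle $\mathcal{A}'$. The Zariski density of $\bar\gamma(X^{\mathrm{an}})$ ensures that the auxiliary divisor $D'$ can be chosen to miss the image, so that the pulled-back section remains nonzero.

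Lifting $\bar\gamma$ to a rigid analytic map $\tilde\gamma:\, X^{\mathrm{an}} \to \tilde Z^{\mathrm{an}}$ (which is possible because $X$ is a smooth curve and $\sigma$ is proper birational), and working near each puncture $p \in \bar X \setminus X$ with an analytic annulus $A(0,R]$, Theorem~\ref{big-Picard} forces $\tilde\gamma$ to extend across $p$; composing with $\sigma$ and the inclusion $\bar Z \hookrightarrow A/B$ then produces a rigid analytic extension $\bar X^{\mathrm{an}} \to (A/B)^{\mathrm{an}}$ of the non-constant map $\bar\gamma$, contradicting strict transcendence (since $B \subsetneq A$). Hence $\dim \bar Z = 0$, finishing the proof. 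The main obstacle is the technical package required to feed a singular ambient $\bar Z$ into Theorem~\ref{big-Picard}: resolving singularities while preserving the non-vanishing of the pullback jet differential, converting the big-and-nef pullback line bundle into an ample one without destroying that non-vanishing, and lifting $\bar\gamma$ across points where it meets the singular locus of $\bar Z$. All three issues are manageable by exploiting the Zariski density of $\bar\gamma(X^{\mathrm{an}})$ in $\bar Z$ and by standard birational manipulations, but they require careful bookkeeping.
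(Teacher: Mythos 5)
Your proposal is correct and follows essentially the same route as the paper: quotient by the identity component of the stabilizer, use the Ueno--Kawamata structure theorem together with the jet-projection dichotomy (Propositions~\ref{case-1} and~\ref{case-2}) to produce a jet differential on $Z_{\mathrm{quot}}$ vanishing on an ample line bundle, deduce $f^*\omega \not\equiv 0$ from Zariski density of the lifted curve, and invoke Theorem~\ref{big-Picard} to contradict strict transcendence. Your additional bookkeeping (resolving singularities of $\bar Z$, converting the big-and-nef pullback to an ample twist, and lifting $\bar\gamma$ across the exceptional locus) addresses genuine technical points that the paper's proof passes over in silence, but it does not change the underlying argument.
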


\begin{proof}
	Denote $Z := \mathrm{Zar}(\gamma(X^{\mathrm{an}}))$ the Zariski closure and $B := \mathrm{Stab}^0(Z)$ the identity component of the stabilizer. Consider the quotient map $A \twoheadrightarrow A/B$ and denote by $Z_{\mathrm{quot}}$ the image of $Z$ in $A/B$. For the case that $Z_{\mathrm{quot}}$ is zero-dimensional,  we have already proved that $Z$ is a finite union of translates of abelian subvariety. Thus we focus on the case that $\mathrm{dim}\,Z_{\mathrm{quot}}>0$. 
	
	By the structure theorem due to Ueno and Kawamata~\cite{Ueno75, Kaw80}, we know that $Z_{\mathrm{quot}}$ is of general type and its stabilizer has zero dimension. Hence the assumption of Proposition~\ref{case-1} does not happen here. Now thanks to Proposition~\ref{case-2}, we can find some nonzero jet differential $\omega \in H^0(Z_{\mathrm{quot}}, E_{k,m}T^*_{Z_{\mathrm{quot}}} \otimes \mathcal{A}^{-1})$
	for some ample line bundle $\mathcal{A}$ over $Z_{\mathrm{quot}}$ and some positive integer $m$. Denote by $f:\, X^{\mathrm{an}} \to Z^{\mathrm{an}}_{\mathrm{quot}}$ the induced analytic map. Since the image of $f_k$ in $Z_{\mathrm{quot},k}$ is Zariski dense (cf. Section~\ref{jet-proj}), we have $f^*\omega \not\equiv 0$. Thus by Theorem~\ref{big-Picard}, $f$ is extendable. This contradicts to the
	assumption of strict transcendence of $\gamma$.
	\end{proof}
Now we consider an affine curve $X \subset (\mathbb{G}_{m,\bF})^g$ contained in the uniformizing torus of $A$. Let $p_X$ be the restriction of the uniformizing map to $X$ as in \eqref{p_X}. By Theorem~\ref{translate}, to conclude Theorem~\ref{thm-A} in $1$-dimensional case, it suffices to verify the strictly transcendental property of $p_X$. Although we only need to consider the case that $X$ is a curve, we state the following theorem in its full generality.
\begin{thm}\label{strict-trans-p_X}
	Let $p:\, (\mathbb{G}^{\mathrm{an}}_{m,\bF})^g \to A^{\mathrm{an}}$ be the uniformizing map of a totally degenerate abelian variety $A$ over $\bF$. Let $X \subset (\mathbb{G}_{m, \bF})^g$ be an irreducible affine variety with the restricted map $p_X:= p|_{X^{\mathrm{an}}}$. Then $p_X$ is strictly transcendental.  
\end{thm}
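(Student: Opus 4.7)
The plan is to exploit the compatibility of the non-archimedean uniformization with the quotient $q:\, A \to A/B$, then to reduce to the curve case and apply Theorem~\ref{non-extendable}. First, I would verify that for any abelian subvariety $B \subsetneq A$, the quotient $\tilde{A} := A/B$ is itself totally degenerate, with uniformizing torus $\tilde{T} := T/T_B$, where $T_B \subset T$ is the algebraic subtorus whose analytification is the identity component of $p^{-1}(B^{\mathrm{an}}(\bF))$. This produces a commutative square
\[
\begin{CD}
T^{\mathrm{an}} @>\pi>> \tilde{T}^{\mathrm{an}} \\
@VpVV @VV\tilde{p}V \\
A^{\mathrm{an}} @>q>> \tilde{A}^{\mathrm{an}}
\end{CD}
\]
where $\tilde{p}$ is the uniformizing map of $\tilde{A}$, so that $q \circ p_X$ factors through $\pi$. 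Let $\tilde{X} \subset \tilde{T}$ denote the Zariski closure of $\pi(X)$. If $\dim \tilde{X} = 0$, then $q \circ p_X$ is constant and strict transcendence holds trivially; so from now on suppose $\dim \tilde{X} \geq 1$.

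To prove non-extendability of $q \circ p_X$ I would first reduce to the case of a curve. By Bertini's theorem, pick an affine curve $C \subset X$ (e.g., a generic complete intersection) such that $\pi(C)$ has positive-dimensional Zariski closure $C' \subset \tilde{T}$. Let $\bar{C}$ and $\bar{C'}$ denote the smooth projective completions of $C$ and $C'$ respectively; then $\pi|_C$ extends to a finite surjective morphism $\bar{\pi}:\, \bar{C} \to \bar{C'}$. Assume for contradiction that $q \circ p_X$ extends to a rigid analytic morphism $\bar{X}^{\mathrm{an}} \to \tilde{A}^{\mathrm{an}}$. By rigid GAGA this comes from an algebraic morphism $\phi:\, \bar{X} \to \tilde{A}$, which I compose with the unique morphism $\bar{C} \to \bar{X}$ extending $C \hookrightarrow \bar{X}$ (existing because $\bar{C}$ is smooth projective and $\bar{X}$ is proper) to obtain an algebraic morphism $\psi:\, \bar{C} \to \tilde{A}$ that extends $q \circ p_C$.

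Finally I would descend $\psi$ through $\bar{\pi}$. On $C$ one has $\psi|_C = \tilde{p}_{C'} \circ \pi|_C$, so $\psi$ is constant on the fibers of $\pi|_C$. The difference morphism $\Delta:\, (\bar{C} \times_{\bar{C'}} \bar{C})_{\mathrm{red}} \to \tilde{A}$ given by $(c_1, c_2) \mapsto \psi(c_1) - \psi(c_2)$ vanishes on the dense open subset coming from $C \times_{C'} C$, hence vanishes identically by the separatedness of $\tilde{A}$ together with the reducedness of the source. Thus $\psi$ is constant on the fibers of $\bar{\pi}$, and since $\bar{C'}$ is normal this yields an algebraic factorization $\bar{\psi}:\, \bar{C'} \to \tilde{A}$ with $\psi = \bar{\psi} \circ \bar{\pi}$. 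By construction $\bar{\psi}$ agrees with $\tilde{p}_{C'}$ on $C'$-points, so $\bar{\psi}^{\mathrm{an}}$ is a rigid analytic extension of $\tilde{p}_{C'}$ to $(\bar{C'})^{\mathrm{an}}$, contradicting Theorem~\ref{non-extendable} applied to the affine curve $C' \subset \tilde{T}$ inside the totally degenerate abelian variety $\tilde{A}$. I anticipate that the main technical difficulties will be verifying the compatibility of uniformizations in the first step (requiring careful bookkeeping with the N\'eron model and the formal-torus lifting over the DVR $R$, to ensure both that $\tilde{A}$ is totally degenerate and that $\tilde{p}$ is the analytification of the quotient torus map), and making the fiber-product descent fully rigorous via separatedness of $\tilde{A}$, reducedness of the fiber product, and normality of $\bar{C'}$.
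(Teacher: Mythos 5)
Your proposal is correct and follows essentially the same route as the paper: reduce to an algebraic curve in $X$, pass to the quotient torus $T/T_B$ uniformizing the quotient abelian variety $A/B$, and derive a contradiction with Theorem~\ref{non-extendable}. Your explicit descent of the algebraic extension from $\bar{C}$ to $\bar{C'}$ (via the fiber product and normality) fills in a step the paper's proof leaves implicit, namely that extendability of the composite along $C^{\mathrm{an}}$ forces extendability of the restricted uniformizing map along the image curve in the quotient torus.
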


\begin{proof}
	We argue by contradiction. Suppose that there exists an abelian subvariety $B \subsetneq A$ such that 
	\[
	X^{\mathrm{an}} \xrightarrow{p_X} A^{\mathrm{an}} \twoheadrightarrow (A/B)^{\mathrm{an}}
	\]
	is nonconstant and extendable. Then we can find an algebraic curve $C \subset X$ such that the restriction
	\[
	C^{\mathrm{an}} \xrightarrow{p_C} A^{\mathrm{an}} \twoheadrightarrow (A/B)^{\mathrm{an}}
	\]
	is also nonconstant and extendable.
	
	Since $A$ is totally degenerate, so is the abelian subvariety $B$. Precisely, for the identity component $\mathfrak{B}^0$ of the N\'{e}ron model $\mathfrak{B}$ of $B$, the special fiber $\mathfrak{B}^0_k$ is not only a split torus but also a subtorus of $\mathfrak{A}^0_k$. Therefore $p^{-1}(B^{\mathrm{an}})=:T^{\mathrm{an}}_B$ is a subtorus  of $(\mathbb{G}^{\mathrm{an}}_{m,\bF})^g$. Denote by $T_{\mathrm{quot}}$ the quotient torus. Then we have the following commutative diagram
	\[
	\xymatrix{
		C^{\mathrm{an}} \ar[r] & (\mathbb{G}^{\mathrm{an}}_{m,\bF})^g \ar@{->>}[r] \ar[d] & T^{\mathrm{an}}_{\mathrm{quot}} \ar[d] \\
		&  A^{\mathrm{an}}  \ar@{->>}[r] & (A/B)^{\mathrm{an}}.
	}
	\]
	By our assumption, the composition $C^{\mathrm{an}} \to (\mathbb{G}^{\mathrm{an}}_{m,\bF})^g \twoheadrightarrow T^{\mathrm{an}}_{\mathrm{quot}} \to (A/B)^{\mathrm{an}}$ is extendable.
However,  the image of following composition of algebraic morphisms
	\[
	C \hookrightarrow (\mathbb{G}_{m,\bF})^g \twoheadrightarrow T_{\mathrm{quot}}
	\]
	is an algebraic curve
	$C_{\mathrm{quot}}$ 
	in the torus $T_{\mathrm{quot}}$, thus by Theorem~\ref{non-extendable} the restricted uniformizing map
	\[
	C^{\mathrm{an}}_{\mathrm{quot}} \to T^{\mathrm{an}}_{\mathrm{quot}} \to (A/B)^{\mathrm{an}}
	\]
	is \emph{not} extendable, which is absurd.
\end{proof}

\begin{proof}[Proof of the Theorem~\ref{thm-A}]
The case that $\mathrm{dim}\,X=1$
	follows from Theorems~\ref{translate},~\ref{strict-trans-p_X}.

	Now we consider the case that $X$ is of higher dimension. We argue by contradiction.
	Suppose that $\mathrm{Zar}(p_X):= \mathrm{Zar}(p_X(X^{\mathrm{an}}))$ is not a finite union of translates of abelian subvariety. Then
	by the structure theorem of Ueno and Kawamata \cite{Ueno75, Kaw80}, we only need to consider the case that $\mathrm{Sp}(\mathrm{Zar}(p_X)) \subsetneq \mathrm{Zar}(p_X)$. In particular, we can assume that $Z:=\mathrm{Zar}(p_X)$ is of general type. Then one can find a point $z \in p(X^{\mathrm{an}}) \setminus \mathrm{Sp}(Z)^{\mathrm{an}}$. Now we choose a point $x \in p^{-1}_X(z)$ and an algebraic curve $C \subset X$ which passes through $x$. Since we already proved Theorem~\ref{thm-A} for curves, the Zariski closure $\mathrm{Zar}(p_X(C^{\mathrm{an}}))$ is a translate of abelian subvariety contained in $Z$. Thus $\mathrm{Zar}(p_X(C^{\mathrm{an}})) \subset \mathrm{Sp}(Z)$.
On the other hand, we also know that $z \in \mathrm{Zar}(p_X(C^{\mathrm{an}}))$ and $z$ is outside $\mathrm{Sp}(Z)$ by our choice of $z$, which gives us a contradiction.

\end{proof}

\subsection{Pseudo-Borel hyperbolicity of closed subvarieties of abelian varieties}

Let $X$ be a closed subvariety of an abelian variety $A$ over $\bF$. Here we do not assume that $A$ is totally degenerate. 
Recall that the \emph{special set} $\mathrm{Sp}(X)$ of $X$ is defined to be the union of translates of positive-dimensional abelian subvarieties of $A$ contained in $X$. Assume that $X$ is of general type, then by Kawamata's theorem {\cite[Theorem~4]{Kaw80}}, one has $\mathrm{Sp}(X) \subsetneq X$.

\begin{proof}[Proof of the Theorem~\ref{thm-B}]
	By {\cite[Theorem~3.1]{Sun-non-arc}} we can assume that $S$ is a quasi-projective curve. After replacing $X$ by the Zariski closure of $f(S^{\mathrm{an}})$ we can assume that $f$ has Zariski dense image in $X$. Note that this process will not violate the condition that $X$ is of general type since $f(S^{\mathrm{an}}) \not\subset \mathrm{Sp}(X)^{\mathrm{an}}$. Now we consider the jet projection
\[
\tau_k:\, X_k \hookrightarrow A_k \twoheadrightarrow \mathcal{R}_{g,k},
\]
as in Section~\ref{jet-proj}. Since $X$ is of general type,  the dimension of $\mathrm{Stab}(X)$ must be zero. Hence by Proposition~\ref{case-2}, we can find some nonzero jet differential $\omega \in H^0(X, E_{k,m}T^*_X \otimes \mathcal{A}^{-1})$
for some ample line bundle $\mathcal{A}$ over $X$ and some positive integer $m$. By the Zariski density assumption, we know that $f^*\omega \not\equiv 0$.  Now the algebraicity of $f$ follows from Theorem~\ref{big-Picard} and the rigid GAGA theorem.
\end{proof}

\begin{rmk}
Using Theorem~\ref{thm-B} and the criterion {\cite[Theorem~2.18]{JV18}}, it follows that a  general type  closed subvariety $X$ of an abelian variety over $\bF$ is $\bF$-analytically Brody hyperbolic modulo $\mathrm{Sp}(X)$. Note that this result was obtained in~\cite{Morrow2020}  by an alternative approach.

\end{rmk}

\begin{center}

\end{center}
\end{document}